\newcommand{\Abs}[1]{\left\vert #1 \right\vert}
\newcommand{\abs}[1]{\vert #1 \vert}
\newcommand{\bigabs}[1]{\bigl\vert #1 \bigr\vert}
\newcommand{\norm}[1]{\left\Vert #1 \right\Vert}
\newcommand{\bignorm}[1]{\bigl\Vert #1 \bigr\Vert}
\newcommand{\Nmin}{N_{\mathrm{min}}}
\newcommand{\Lmin}{L_{\mathrm{min}}}
\newcommand{\Lmed}{L_{\mathrm{med}}}
\newcommand{\Lmax}{L_{\mathrm{max}}}
\newcommand{\C}{\mathbb{C}}
\newcommand{\N}{\mathbb{N}}
\newcommand{\R}{\mathbb{R}}
\newcommand{\Innerprod}[2]{\left\langle \, #1 , #2 \, \right\rangle}
\newcommand{\innerprod}[2]{\langle \, #1 , #2 \, \rangle}
\newcommand{\angles}[1]{\langle #1 \rangle}
\DeclareMathOperator{\im}{Im}
\DeclareMathOperator{\re}{Re}
\DeclareMathOperator{\sgn}{sgn}
\newtheorem{theorem}{Theorem}
\newtheorem{lemma}{Lemma}
\newtheorem{corollary}{Corollary}
\theoremstyle{definition}
\newtheorem{definition}{Definition}
\theoremstyle{remark}
\newtheorem{remark}{Remark}
\title[Spatial analyticity for 2d DKG]{On the radius of spatial analyticity for solutions of the Dirac-Klein-Gordon equations in two space dimensions}
\author[S.~Selberg]{Sigmund Selberg}
\address{Department of Mathematics, University of Bergen, PO Box 7803, 5020 Bergen, Norway}
\email{sigmund.selberg@uib.no}
\date{}
\begin{document}

\begin{abstract}
We consider the initial value problem for the Dirac-Klein-Gordon equations in two space dimensions. Global regularity for  $C^\infty$ data was proved by Gr\"unrock and Pecher. Here we consider analytic data, proving that if the initial radius of analyticity is $\sigma_0 > 0$, then for later times $t > 0$  the radius of analyticity obeys a lower bound $\sigma(t) \ge \sigma_0 \exp(-At)$. This provides information about the possible dynamics of the complex singularities of the holomorphic extension of the solution at time $t$. The proof relies on an analytic version of Bourgain's Fourier restriction norm method, multilinear space-time estimates of null form type and an approximate conservation of charge.
\end{abstract}

\maketitle

\section{Introduction}

We consider the Cauchy problem for the Dirac-Klein-Gordon (DKG) equations in two space dimensions,
\begin{equation}\label{DKG2d}
\left\{
\begin{alignedat}{2}
  (-i\partial_t - i\alpha \cdot \nabla + M\beta)\psi &= \phi\beta\psi,& \qquad \psi(0,x) &= \psi_0(x),
  \\
  (\partial_t^2 - \Delta + m^2) \phi &= \innerprod{\beta \psi}{\psi},& \qquad (\phi,\partial_t\phi)(0,x) &= (\phi_0,\phi_1)(x),
\end{alignedat}
\right.
\end{equation}
where the unknowns $\psi$ (the Dirac spinor) and $\phi$ (the meson field) are functions of $(t,x) \in \R \times \R^2$ and take values in $\C^2$ and $\R$, respectively, and $\psi = (\psi_1,\psi_2)^t$ is considered as a column vector upon which the Dirac matrices (in fact, the Pauli matrices)
\[
  \alpha^{1} =
  \begin{pmatrix}
    0 & 1  \\
    1 & 0
  \end{pmatrix},
  \quad
  \alpha^2 =
  \begin{pmatrix}
    0 & -i  \\
    i & 0
  \end{pmatrix}
  \quad
  \text{and}
  \quad
  \beta =
  \begin{pmatrix}
    1 & 0  \\
    0 & -1
  \end{pmatrix}
\]
may act. The standard inner product on $\C^2$ is denoted $\innerprod{\cdot}{\cdot}$. We write $x=(x^1,x^2)$, $\partial_j = \frac{\partial}{\partial x^j}$, $\nabla = (\partial_1,\partial_2)$, $\Delta = \partial_1^2 + \partial_2^2$ and $\alpha \cdot \nabla = \alpha^1 \partial_1 + \alpha^2 \partial_2$. The masses $M$ and $m$ are given real constants. We shall assume $m > 0$.

In particle physics, DKG arises as a model for forces between nucleons, mediated by mesons; see \cite{Bjorken1964}. The well-posedness of the Cauchy problem in space dimensions $d \le 3$ with data in the family of Sobolev spaces $H^s(\R^d) = W^{s,2}(\R^d)$ has been extensively studied; see~\cite{Chadam:1973, DFS2007, DFS2007b, GP2010, Machihara:2010, Wang:2015, Herr:2014, Herr:2016} and the references therein.

Our aim in this article is to add to the large-data global regularity theory in space dimension $d=2$. Global regularity for $C^\infty(\R^2)$ data was proved by Gr\"unrock and Pecher \cite{GP2010}. Our focus here is on spatial analyticity, with a uniform radius of analyticity $\sigma(t) > 0$ for each time $t$. By this we mean that the solution at time $t$ has a holomorphic extension to the complex strip
\[
  S_{\sigma} = \left\{ x + iy \in \C^2 \colon \text{$x,y \in \R^2$ and $\abs{y_1},\abs{y_2} < \sigma$} \right\}
\]
with $\sigma=\sigma(t)$.

Heuristically, the picture one should have in mind is that $\sigma(t)$ is the distance from $\R^2_x$ to the nearest complex singularity of the holomorphic extension of the solution at time $t$. We will prove a lower bound
\[
\sigma(t) \ge \sigma_0 \exp(-At)
\]
as $t \to \infty$, providing some information about the possible dynamics of the complex singularities.

The proof of global $C^\infty$ regularity in \cite{GP2010} makes use of Bourgain's Fourier restriction norm method, and a key motivation behind the present paper was to investigate to which extent the analytic version of this method---introduced by Bourgain in \cite[Section 8]{Bourgain1993b}---can yield refined information about the regularity of the solution for analytic data. A further motivation was a recent result of Cappiello, D'Ancona and Nicola \cite{CDN2014} (see also \cite{Alin:1984}) on persistence of spatial analyticity for $C^\infty$ solutions of semilinear symmetric hyperbolic systems, which in the special case of DKG\footnote{DKG can be written as a semilinear symmetric hyperbolic system with unknown $u = (\psi_1, \psi_2, \phi,\partial_1 \phi, \partial_2 \phi, \partial_t \phi)^\intercal$.} yields a lower bound
\[
  \sigma(t) \ge \sigma_0 \exp\left(-A\int_0^t \left(1+\norm{\psi(s)}_{L^\infty}+\norm{\phi(s)}_{L^\infty}+\norm{\partial \phi(s)}_{L^\infty}\right) \, ds\right).
\]
This is weaker than our lower bound $\sigma(t) \ge \sigma_0 \exp(-At)$, since the best estimate known on the $L^\infty$ norm of the solution of \eqref{DKG2d} appears to be $O(\exp(Ct))$, which can be obtained from the global existence proof in \cite{GP2010}, hence one would get a double exponential decay rate $\sigma(t) \ge \sigma_0 \exp(-A\exp(Ct))$.

The investigation of spatially uniform lower bounds on the radius of analyticity for nonlinear evolutionary PDE was initiated by Kato and Masuda \cite{KM1986}, and by now there is an extensive catalog of results along these lines for various nonlinear PDE, including the Kadomtsev-Petviashvili equation \cite{Bourgain1993b}, the (generalized) Korteweg-de Vries equation \cite{H1991,BGK2005,Selberg2017}, the Euler equations \cite{KV2009}, the cubic Szeg\H o equation \cite{Gerard2015} and the nonlinear Schr\"odinger equation \cite{H1990,BGK2006,Achenef2017}.

Since the radius of analyticity can be related to the asymptotic decay of the Fourier transform, it is natural to use Fourier methods to study the type of problem outlined above. We take data in the analytic Gevrey class $G^{\sigma,s} = G^{\sigma,s}(\R^2)$, defined for $\sigma > 0$ and $s \in \R$ by
\[
  G^{\sigma,s}(\R^2) = \left\{ f \in L^2(\R^2) \colon \norm{f}_{G^{\sigma,s}} < \infty \right\},
  \qquad
  \norm{f}_{G^{\sigma,s}} = \bignorm{ e^{\sigma\norm{\xi}} \angles{\xi}^s \widehat f(\xi) }_{L^2_\xi}.
\]
Here we denote, for $\xi = (\xi_1,\xi_2) \in \R^2$,
\[
  \norm{\xi} = \abs{\xi_1} + \abs{\xi_2},
  \qquad
  \abs{\xi} = \left( \xi_1^2 + \xi_2^2 \right)^{1/2},
  \qquad
  \angles{\xi} = (1+\abs{\xi}^2)^{1/2},
\]
and
\[
  \widehat f(\xi) = \mathcal F f(\xi) = \int_{\R^2} e^{-ix\cdot\xi} f(x) \, dx
\]
is the Fourier transform. Note that $G^{\sigma,s} = \mathcal F^{-1} ( e^{-\sigma\norm{\cdot}} \angles{\cdot}^{-s} L^2 )$ is isometrically isomorphic to $L^2$ and hence is a Banach space. We recall the fact that every $f \in G^{\sigma,s}$ has a uniform radius of analyticity $\sigma$, that is, $f$ has a holomorphic extension to  $S_\sigma$ (for a proof see, e.g., \cite{Selberg2018}).

Our main result is the following.

\begin{theorem}\label{Thm1}
Consider \eqref{DKG2d} with $m > 0$. Let $\sigma_0 > 0$. Given initial data
\begin{equation}\label{data}
  (\psi_0,\phi_0,\phi_1) \in G^{\sigma_0,0}(\R^2;\C^2) \times G^{\sigma_0,1/2}(\R^2;\R) \times G^{\sigma_0,-1/2}(\R^2;\R),
\end{equation}
let $(\psi,\phi)$ be the unique global $C^\infty$ solution of \eqref{DKG2d}, as obtained in \cite{GP2010}. Then for any $T > 0$ we have
\[
  (\psi,\phi,\partial_t \phi) \in C([-T,T];G^{\sigma(T),0} \times G^{\sigma(T),1/2} \times G^{\sigma(T),-1/2}),
\]
where
\[
  \sigma(T) = \sigma_0 e^{-AT}
\]
for some constant $A > 0$ depending on $\sigma_0$ and the norm of the data. Thus, for any time $t \in \R$, the solution has a uniform radius of analyticity at least $\sigma(\abs{t})$.
\end{theorem}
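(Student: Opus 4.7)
The plan is to adapt the Fourier restriction method of Gr\"unrock-Pecher \cite{GP2010} to Gevrey data, following Bourgain's analytic framework from \cite{Bourgain1993b}, and to combine it with an approximate conservation of the Dirac charge. First I would introduce analytic Bourgain spaces $X^{\sigma,s,b}_\pm$ for the two half-wave pieces of the Dirac equation and $\mathcal{H}^{\sigma,s,b}$ for Klein--Gordon, obtained by inserting the Gevrey weight $e^{\sigma\norm{\xi}}$ into the usual Bourgain norms. The key structural point is the sub-additivity $e^{\sigma\norm{\xi+\eta}} \le e^{\sigma\norm{\xi}} e^{\sigma\norm{\eta}}$, which holds because $\norm{\xi} = \abs{\xi_1}+\abs{\xi_2}$ is an $\ell^1$ norm; this lifts every bilinear or trilinear estimate from the Sobolev-based Bourgain setting to its Gevrey analogue with no loss. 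In particular, the null-form bilinear estimates on $\phi\beta\psi$ and $\innerprod{\beta\psi}{\psi}$ proved in \cite{GP2010} transfer directly.

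A contraction mapping argument then yields local well-posedness of \eqref{DKG2d} in $G^{\sigma,0}\times G^{\sigma,1/2}\times G^{\sigma,-1/2}$ on a time interval $[0,T_0]$ whose length depends only on the norms of the data (and not on $\sigma$, provided $\sigma \le \sigma_0$). The next ingredient is an approximate conservation law for the analytic charge $N(t;\sigma) := \norm{\psi(t)}_{G^{\sigma,0}}^2$. Since the plain $L^2$-charge is exactly conserved, differentiating $N(t;\sigma)$ and using the Dirac equation produces a trilinear form whose Fourier symbol contains the factor $e^{\sigma(\norm{\xi_1}+\norm{\xi_2})} - e^{\sigma\norm{\xi_1+\xi_2}}$, which vanishes at $\sigma = 0$. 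A mean value estimate combined with the null-form bilinear bound from the previous step then yields $N(T_0;\sigma) \le N(0;\sigma) + C\sigma^\theta$ for some $\theta > 0$; a parallel linear Gevrey energy estimate, with the source bounded via the spinor, controls $\phi$ and $\partial_t\phi$.

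The final step is to iterate the local theory on successive subintervals of length $T_0$, reducing the radius of analyticity at each step by a small multiplicative factor $1-\epsilon$ so as to absorb the $\sigma^\theta$ error from the approximate conservation. After $n \sim T/T_0$ iterations the radius satisfies $\sigma_n \approx \sigma_0(1-\epsilon)^n \approx \sigma_0 e^{-AT}$ with $A = \epsilon/T_0$, yielding the desired bound. I expect the main technical hurdle to be the quantitative approximate conservation step: the null structure of the Dirac--Klein--Gordon coupling must be used to extract a genuine positive power of $\sigma$ from the trilinear error, and the $\phi$ component (which has no exact conservation at this regularity) has to be controlled by bootstrapping back to the spinor estimate; making all exponents in this bootstrap close together is the real crux.
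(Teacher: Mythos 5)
Your list of ingredients coincides with the paper's: analytic Bourgain norms obtained by inserting $e^{\sigma\norm{\xi}}$, transfer of the null-form trilinear estimates via the subadditivity $e^{\sigma\norm{\xi}}\le e^{\sigma\norm{\xi-\eta}}e^{\sigma\norm{\eta}}$, a local theory uniform in $\sigma\le\sigma_0$ (Theorem \ref{LWPa}), an approximate conservation of the Gevrey charge with a gain $\sigma^\theta$ extracted from the commutator symbol $e^{\sigma\norm{\eta}}-e^{\sigma\norm{\eta-\xi}}$ (Lemma \ref{CommutatorLemma}), and control of the meson field through the exactly conserved $L^2$ charge of the spinor (the $\delta^{1/2}\norm{\psi_0}_{L^2}^2$ term in \eqref{Nest}). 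Up to that point you are reconstructing the actual proof.

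The gap is in the final iteration, and it is genuine for two reasons. First, you cannot iterate on "successive subintervals of length $T_0$": the local existence time is $\delta\sim(1+\mathfrak M_\sigma+\mathfrak N_\sigma^2)^{-1}$, and $\mathfrak N_\sigma$ necessarily grows without bound, because the Klein--Gordon source $\innerprod{\beta\psi}{\psi}$ carries no small factor of $\sigma$ and is only controlled by the conserved charge, contributing an increment of order $\delta^{1/2}\norm{\psi_0}_{L^2}^2$ on every local interval; so the admissible step length degrades as time advances. Second, shrinking $\sigma$ by a factor $1-\epsilon$ per step does not "absorb" the errors: the increments $C\sigma^\theta$ are additive, and the constant $C$ is not absolute but involves the norms of the solution at the start of each step (see \eqref{Mest}), so the accumulation must be balanced against the norm growth, not against $\sigma$ alone. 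The mechanism that actually closes the argument is a two-level induction. In the inner level one concatenates $n\sim T_0/\delta\sim R$ local intervals of length $\delta\sim c_0/R$ (where $R$ bounds $\mathfrak M_\sigma+\mathfrak N_\sigma^2$), keeping $\sigma$ \emph{fixed} and tied to $R$ by $\sigma^{1/2-a}R^{3/2-p}=1$; the factor $\delta^{p}$ in the per-step error then makes the accumulated error $n\,\delta^{p}\sigma^{1/2-a}R^{3/2}\lesssim T_0\,\sigma^{1/2-a}R^{5/2-p}\lesssim R$, so the norms at most grow by a fixed factor ($11$) over a \emph{fixed} macro-time $T_0$ independent of $R$. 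In the outer level $R$ grows geometrically, $R\to 11^{n}R$ after time $nT_0$, and the exponential decay of the radius is then forced by the coupling $\sigma=R^{-r}$ — it is driven by the geometric growth of the norm bound, not by a chosen geometric shrinking of $\sigma$. Indeed, the rate is dictated by the quadratic dependence of $\delta$ on the data norms (see Remark \ref{AlgebraicRemark}); without setting up this balance between $\sigma$, $R$ and $\delta$, the iteration as you describe it does not close.
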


We have no reason to expect that this bound is optimal, but it does appear to be the best possible with the technique used in the proof, which is based on an analytic version of Bourgain's Fourier restriction norm method, multilinear space-time estimates of null form type and an approximate version of the conservation of charge
\begin{equation}\label{ChargeCons}
  \int_{\R^2} \Abs{\psi(t,x)}^2 \, dx = \mathrm{const}.
\end{equation}

We now describe in more detail the method of proof. The point of departure is the observation that the norm on $G^{\sigma,s}$ is obtained from the Sobolev norm
\[
  \norm{f}_{H^{s}} = \bignorm{\angles{\xi}^s \widehat f(\xi)}_{L^2_\xi}
\]
by the substitution
\[
  f \longrightarrow e^{\sigma\norm{D}}f = \mathcal F^{-1} \left( e^{\sigma\norm{\cdot}} \mathcal Ff \right).
\]
Applying the same substitution in the setting of Bourgain's Fourier restriction norm method, the space $X^{s,b}$ then yields the analytic space $X^{\sigma,s,b}$. This idea was used by Bourgain \cite[Theorem 8.12]{Bourgain1993b} to study spatial analyticity for the Kadomtsev-Petviashvili equation, but the argument applies to a class of dispersive PDE in general, as discussed in \cite{Selberg2018}. In brief summary, the consequences that can be abstracted from Bourgain's argument are the following.
\begin{itemize}
\item[(B1)] If local well-posedness of some nonlinear dispersive PDE can be proved for $H^s$ initial data by a contraction argument in $X^{s,b}$, then one also has local well-posedness for data in $G^{\sigma_0,s}$ for any $\sigma_0 > 0$.
\item[(B2)] If, moreover, the solution extends globally in time (so the $H^s$ norm does not blow up in finite time), then the solution remains spatially analytic for all time, but no lower bound is obtained on $\sigma(t) > 0$ as $t \to \infty$.
\end{itemize}
An additional observation, proved in \cite{Selberg2018}, is that:
\begin{itemize}
\item[(B3)] If the $H^s$ norm is conserved, then $\sigma(t) \ge \sigma_0 \exp(-At)$ is obtained.
\end{itemize}

We emphasize that (B3) does not apply to DKG, since there is no conservation law for the field $\phi$. Thus a more involved argument is needed to prove our main result. The first and easiest step of the proof is to use the idea behind (B1) to obtain a local well-posedness result for data \eqref{data}, analogous to the local result from \cite{GP2010} with $H^s$ data. To reach any time $T > 0$, we then iterate the local result, and to control the growth of the data norms in each step we rely on an approximate conservation law for $\psi(t,\cdot)$ in $G^{\sigma,0}$, involving the parameter $\sigma > 0$ and reducing to the conservation law \eqref{ChargeCons} in the limit $\sigma \to 0$. Superficially, this parallels the approach used by Tesfahun and the author in \cite{ST2015} for the 1d DKG problem, for which an algebraic lower bound was obtained, but the function spaces and estimates are much more involved in the 2d case. See Remark \ref{AlgebraicRemark} below for an explanation of why we only get an exponential lower bound instead of an algebraic one in 2d. In 3d, on the other hand, global $C^\infty$ regularity for large data remains an open problem.

We now turn to the proof of Theorem \ref{Thm1}. Since $m > 0$, we may assume $m=1$ by a rescaling. 

The paper is organized as follows. In the next section we reformulate the system in a way which makes it easy to see the null structure. In Section \ref{MainProof} we state the analytic local existence theorem and the approximate conservation law and prove that they imply the main result, Theorem \ref{Thm1}. In Section \ref{Spaces} we introduce the function spaces that we use. In Section \ref{Spacetime} we prove some multilinear space-time estimates of null-form type, which are then used to prove the local existence in Section \ref{Local} and the approximate conservation law in Section \ref{Approx}.

\section{Reformulation of the system}

Set $D = \nabla/i$. For a given function $\xi \mapsto h(\xi)$ on $\R^2$ we denote by $h(D)$ the Fourier multiplier defined by
\[
  h(D) f\ = \mathcal F^{-1} \bigl( h(\xi) \mathcal F f(\xi) \bigr).
\]
Using the Dirac projections
\[
  \Pi_\pm = \Pi(\pm D), \qquad \Pi(\xi) = \frac12 \left( I + \frac{\xi}{\abs{\xi}}  \cdot \alpha \right)
\]
we now write
\[
  \psi = \psi_+ + \psi_-, \quad \text{where} \quad \psi_\pm = \Pi_\pm\psi.
\]
Further we set
\[
  \phi = \frac12 \left( \phi_+ + \phi_- \right), \quad \text{where} \quad \phi_\pm = \phi \pm i\angles{D}^{-1}\partial_t\phi,
\]
and note that $\phi = \re\phi_+$, since $\phi$ is real valued (hence so is $\angles{D}^{-1}\partial_t\phi$). Since
\[
  \abs{D}\Pi_+-\abs{D}\Pi_-=- i\alpha \cdot \nabla + \beta,
\]
one then obtains the following formulation of \eqref{DKG2d} (with $m=1)$:
\begin{equation}\label{DKG}
\left\{
\begin{alignedat}{2}
  (-i\partial_t + \abs{D})\psi_+ &= \Pi_+\bigl( - M\beta\psi + (\re\phi_+)\beta\psi\bigr),& \qquad \psi_+(0,x) &= f_+(x),
  \\
  (-i\partial_t - \abs{D})\psi_- &= \Pi_-\bigl(- M\beta\psi + (\re\phi_+)\beta\psi\bigr),& \qquad \psi_-(0,x) &= f_-(x),
  \\
  (-i\partial_t + \angles{D}) \phi_+ &= \angles{D}^{-1} \innerprod{\beta \psi}{\psi},& \qquad \phi_+(0,x) &= g_+(x),
\end{alignedat}
\right.
\end{equation}
where $f_\pm = \Pi_\pm\psi_0$ and $g_+ = \phi_0+i\angles{D}^{-1}\phi_1$.

As shown in \cite{DFS2007}, each bilinear term in \eqref{DKG} has a spinorial null structure encoded in the estimate
\begin{equation}\label{NullEstimate}
  \Pi(-s_2\eta)\Pi(s_1\xi) = O(\angle(s_1\xi,s_2\eta)),
\end{equation}
where $\xi,\eta \in \R^2$, $s_1,s_2 \in \{+,-\}$ and $-s_1$ denotes the reverse sign of $s_1$. This estimate will be used in tandem with the sign-reversing identity
\begin{equation}\label{Commutation}
  \Pi(\xi)\beta = \beta\Pi(-\xi).
\end{equation}

\section{Proof of the main theorem}\label{MainProof}

In this section we first state the analytic local existence theorem and the approximate conservation law, and then we show that they imply the main result, Theorem \ref{Thm1}.

We start with the local existence result (the proof is given in Section \ref{Local}).

\begin{theorem}\label{LWPa}
There exists a constant $c_0 > 0$ such that for any $\sigma_0 \ge 0$ and any data
\begin{equation}\label{Splitdata}
  (f_+,f_-,g_+) \in X_0 := G^{\sigma_0,0}(\R^2;\C) \times G^{\sigma_0,0}(\R^2;\C) \times G^{\sigma_0,1/2}(\R^2;\C),
\end{equation}
the Cauchy problem \eqref{DKG} has a unique solution
\[
  (\psi_+,\psi_-,\phi_+) \in C([-\delta,\delta];X_0)
\]
on $(-\delta,\delta) \times \R^2$, where
\[
  \delta = \frac{c_0}{1+\norm{f_+}_{G^{\sigma_0,0}}^2 + \norm{f_-}_{G^{\sigma_0,0}}^2 + \norm{g_+}_{G^{\sigma_0,1/2}}^2}.
\]
\end{theorem}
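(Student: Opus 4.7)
The plan is to prove this via contraction in an analytic version of Bourgain's Fourier restriction norm spaces. Introduce spaces $X^{\sigma_0,s,b}_\pm$ adapted to the half-wave operators $-i\partial_t \pm \abs{D}$ and $X^{\sigma_0,1/2,b}$ adapted to $-i\partial_t + \angles{D}$, each defined by applying the isometry $u \mapsto e^{\sigma_0\norm{D}} u$ to the classical Bourgain spaces $X^{s,b}_\pm$ used in the $H^s$-theory of \cite{GP2010}. Since $e^{\sigma_0\norm{D}}$ is an isometric isomorphism between $G^{\sigma_0,s}$ and $H^s$, the Cauchy problem \eqref{DKG} transforms, after the substitution, into a Cauchy problem in the standard Sobolev framework for the transformed unknowns.

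Next, write \eqref{DKG} in Duhamel form and set up the fixed-point map on the closed ball of radius comparable to $\norm{(f_+,f_-,g_+)}_{X_0}$ in the time-restricted spaces over $[-\delta,\delta]$. The linear estimates for Bourgain spaces bound the homogeneous part by the data norm, and the nonlinear part reduces to bilinear estimates of the schematic form
\[
  \bignorm{(\re\phi_+)\beta\psi}_{X^{\sigma_0,0,b-1}_\pm} \lesssim \delta^\theta \norm{\phi_+}_{X^{\sigma_0,1/2,b}} \norm{\psi}_{X^{\sigma_0,0,b}_\pm},
\]
\[
  \bignorm{\angles{D}^{-1}\innerprod{\beta\psi}{\psi}}_{X^{\sigma_0,1/2,b-1}} \lesssim \delta^\theta \norm{\psi}_{X^{\sigma_0,0,b}_\pm}^2,
\]
for some $b > 1/2$ close to $1/2$ and some $\theta > 0$, where the $\delta^\theta$ arises from the standard embedding $\norm{\chi_{[-\delta,\delta]} u}_{X^{s,b'}} \lesssim \delta^{b-b'}\norm{u}_{X^{s,b}}$ for $b' < b$. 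Choosing $\delta$ so that $\delta^\theta$ absorbs the data-norm factor yields a contraction; because the nonlinearities are quadratic, this produces exactly the scaling $\delta \sim (1 + \norm{\text{data}}^2)^{-1}$ stated in the theorem.

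The bridge from $H^s$ to analytic data is subadditivity of the weight, $\norm{\xi} \le \norm{\xi-\eta} + \norm{\eta}$, which gives $e^{\sigma_0\norm{\xi}} \le e^{\sigma_0\norm{\xi-\eta}} e^{\sigma_0\norm{\eta}}$. Hence for any bilinear operator $B$ whose Fourier side is a convolution one has the pointwise bound
\[
  \bigabs{\mathcal F[e^{\sigma_0\norm{D}} B(u,v)](\xi)} \lesssim \bigl(\abs{\mathcal F[e^{\sigma_0\norm{D}} u]} * \abs{\mathcal F[e^{\sigma_0\norm{D}} v]}\bigr)(\xi),
\]
so the analytic bilinear estimates follow from their $\sigma_0 = 0$ counterparts applied to $e^{\sigma_0\norm{D}} u$ and $e^{\sigma_0\norm{D}} v$, with identical constants and identical $\delta$-dependence. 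Thus the proof of Theorem \ref{LWPa} requires no new multilinear analysis beyond what is needed at $\sigma_0 = 0$, and the time of existence depends only on the data norm, not on $\sigma_0$ itself.

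The main obstacle, deferred to Section \ref{Spacetime}, is the proof of those $H^s$ bilinear null-form estimates. A naive Sobolev product estimate fails because of derivative losses in the Klein-Gordon source $\innerprod{\beta\psi}{\psi}$ and the low regularity of $\psi$. The gain must come from the spinorial null structure \eqref{NullEstimate} combined with the sign-reversing identity \eqref{Commutation}, which together furnish an angular cancellation $\vangle(s_1\xi,s_2\eta)$ in each bilinear symbol; via the usual algebraic relation between angles and the resonance moduli $\angles{\tau \pm \abs{\xi}}$, this angular factor can be traded for additional smoothing in the relevant $X^{s,b}$ weights. Granting these null-form estimates, the present theorem is a textbook Picard iteration, with uniqueness, continuity in time, and continuous dependence on data all following in the standard way.
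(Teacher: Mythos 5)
Your overall strategy --- Picard iteration in analytic Bourgain spaces, with the analytic case reduced to the $\sigma_0=0$ case via the subadditivity $e^{\sigma_0\norm{\xi}}\le e^{\sigma_0\norm{\xi-\eta}}e^{\sigma_0\norm{\eta}}$ of the weight, and with the null structure \eqref{NullEstimate}, \eqref{Commutation} supplying the bilinear gain --- is exactly the paper's. The genuine gap is in the quantitative part, which is not optional here because the precise dependence $\delta\sim(1+\norm{\mathrm{data}}^2)^{-1}$ is part of the statement and is what ultimately yields the exponential (rather than much worse) lower bound on $\sigma(t)$. Your assertion that ``because the nonlinearities are quadratic, this produces exactly the scaling $\delta\sim(1+\norm{\mathrm{data}}^2)^{-1}$'' is a non sequitur: a quadratic nonlinearity whose bilinear estimate gains $\delta^{\theta}$ forces $\delta\sim\norm{\mathrm{data}}^{-1/\theta}$, so you need $\theta=1/2$ exactly, not ``some $\theta>0$'' (see Remark \ref{AlgebraicRemark}). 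In the framework you propose this is not attainable: your second schematic estimate places the source in $X^{\sigma_0,1/2,b-1}$ with $b-1<-1/2$, where the Duhamel/energy estimate fails (Lemma \ref{IVPlemma} requires the source exponent to lie in $(-1/2,1/2)$), and the generic time-localization gain with $b>1/2$ and source exponent just above $-1/2$ is only $\delta^{\epsilon}$, which would give $\delta\sim\norm{\mathrm{data}}^{-1/\epsilon}$.

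The paper extracts the full $\delta^{1/2}$ by working at the endpoint $b=1/2$ in the $\ell^{1}$-Besov refinement $X^{\sigma,s,1/2;1}_{\pm}$ (which still embeds into $C_tG^{\sigma,s}$ by Lemma \ref{Emb}), measuring the nonlinearities in $X^{\sigma,s,-b_j;\infty}$ with interior exponents $b_0=b_1=b_2=1/3$, and combining the Duhamel gain $\delta^{1/2-1/3}$ from Lemma \ref{IVPlemma} with the gain $\delta^{1/2-1/3}$ from lowering each input from modulation exponent $1/2$ to $1/3$ via Lemma \ref{CutoffLemma}; the total is $\delta^{3/2-b_0-b_1-b_2}=\delta^{1/2}$, and Theorem \ref{TrilinearTheorem} is proved precisely for these Besov norms under the conditions $b_j\ge 1/4$, $b_0+b_1+b_2\ge 1$. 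Your reduction of the analytic estimates to their $\sigma_0=0$ counterparts and the deferral of the null-form estimates are correct and match the paper; what is missing is the choice of function spaces and exponents that actually makes $\theta=1/2$ attainable, together with the (routine) induction on the iterates giving $A_n\le 2ca_0$, $B_n\le 2c(a_0+b_0)$ and contraction of the differences.
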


\begin{remark} The uniqueness is immediate since the solution is certainly $C^\infty$.
\end{remark}

\begin{remark}\label{AlgebraicRemark} If the dependence of the local existence time in Theorem \ref{LWPa} could be improved to
\[
  \delta = \frac{c_0}{1+\norm{f_+}_{G^{\sigma_0,0}}^2 + \norm{f_-}_{G^{\sigma_0,0}}^2 + \norm{g_+}_{G^{\sigma_0,1/2}}^\rho}
\]
for some $\rho < 2$, then the argument in subsection \ref{MainProof} below would give an algebraic lower bound on $\sigma(t)$ instead of an exponential one. But in order to get the improved existence time we would need to improve the estimate \eqref{Aest} used in the proof of the local existence theorem, more precisely the factor $\delta^{1/2}$ in that estimate would have to be replaced by $\delta^{1/\rho}$, and in view of \eqref{DifficultEstimate} this does not seem possible using the (sharp) estimates in Theorem \ref{TrilinearTheorem}.
\end{remark}

The conservation of charge
\[
  \int \abs{\psi(t,x)}^2 \, dx = \int \left( \abs{\psi_+(t,x)}^2 + \abs{\psi_-(t,x)}^2 \right) \, dx = \mathrm{const.}
\]
does not hold for $\Psi = e^{\sigma\norm{D}}\psi$ with $\sigma > 0$, but we can nevertheless obtain an approximate conservation law. Indeed, we have the following (proved in Section \ref{Approx}).

\begin{theorem}\label{ApproxCons} Let $\sigma_0 > 0$. Consider the local solution from Theorem \ref{LWPa}, with time of existence $\delta > 0$, and define
\begin{align*}
  \mathfrak M_\sigma(t) &= \norm{\psi_+(t,\cdot)}_{G^{\sigma,0}}^2 + \norm{\psi_-(t,\cdot)}_{G^{\sigma,0}}^2,
  \\
  \mathfrak N_\sigma(t) &= \norm{\phi_+(t,\cdot)}_{G^{\sigma,1/2}},
\end{align*}
for $t \in [0,\delta]$ and $\sigma \in [0,\sigma_0]$. Assume $a \in (1/4,1/2]$ and set
\begin{equation}\label{p}
  p = \min\bigl(a,3(a-1/4)\bigr).
\end{equation}
Then for all $\sigma \in [0,\sigma_0]$ we have
\begin{align}
  \label{Mest}
  \sup_{t \in [0,\delta]} \mathfrak M_\sigma(t)
  &\le
  \mathfrak M_\sigma(0) + c \delta^{p} \sigma^{1/2-a} \mathfrak M_\sigma(0) \left( \mathfrak M_\sigma(0)^{1/2} + \mathfrak N_\sigma(0) \right),
  \\
  \label{Nest}
  \sup_{t \in [0,\delta]} \mathfrak N_\sigma(t)
  &\le
  \mathfrak N_\sigma(0) + c \delta^{1/2} \norm{\psi(0,\cdot)}_{L^2}^2 + c \delta^{p} \sigma^{1/2-a} \mathfrak M_\sigma(0),
\end{align}
where the constant $c>0$ depends only on $a$ and $M$.
\end{theorem}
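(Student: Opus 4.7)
My plan is to run an energy-method argument on $\mathfrak M_\sigma$ and $\mathfrak N_\sigma$ with the analytic weight $e^{\sigma\norm{D}}$ built in, to extract the small factor $\sigma^{1/2-a}$ from the commutator $\bigl[e^{\sigma\norm{D}},\re\phi_+\bigr]$ via a pointwise Fourier symbol bound, and to feed the result into the analytic trilinear null-form estimates of Theorem \ref{TrilinearTheorem}. Set $\Psi_\pm = e^{\sigma\norm{D}}\psi_\pm$ and $\Psi = \Psi_+ + \Psi_- = e^{\sigma\norm{D}}\psi$, so $\mathfrak M_\sigma(t) = \fixednorm{\Psi_+(t)}_{L^2}^2 + \fixednorm{\Psi_-(t)}_{L^2}^2$. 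Applying $e^{\sigma\norm{D}}$ to \eqref{DKG} (noting that it commutes with $\Pi_\pm$ and with the constant matrix $\beta$) and differentiating in $t$, the free parts $\pm\abs{D}\Psi_\pm$ give purely imaginary inner products and drop out. Adding the $\Psi_+$ and $\Psi_-$ identities and using $\Pi_+\Psi + \Pi_-\Psi = \Psi$, the mass contribution $-M\beta\Psi$ pairs with $\bar\Psi$ to give a real integrand (Hermiticity of $\beta$) and also drops. Splitting
\[
  e^{\sigma\norm{D}}\bigl((\re\phi_+)\beta\psi\bigr) = (\re\phi_+)\beta\Psi + \bigl[e^{\sigma\norm{D}},\re\phi_+\bigr]\beta\psi,
\]
the non-commutator piece again has real integrand (real coefficient $\re\phi_+$ times the real quantity $\innerprod{\Psi}{\beta\Psi}$) and drops, leaving
\[
  \mathfrak M_\sigma(t) - \mathfrak M_\sigma(0) = -2\im\int_0^t\int_{\R^2}\innerprod{\Psi}{\bigl[e^{\sigma\norm{D}},\re\phi_+\bigr]\beta\psi}\,dx\,ds.
\]

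\textbf{Symbol bound and trilinear estimate.} In spatial Fourier, the commutator kernel is $e^{\sigma\norm{\xi}} - e^{\sigma\norm{\eta}}$, with $\xi-\eta$ the frequency carried by $\re\phi_+$. Interpolating the elementary bounds $\bigabs{e^A-e^B}\le\bigabs{A-B}\,e^{\max(A,B)}$ and $\bigabs{e^A-e^B}\le 2e^{\max(A,B)}$, and using $\bigabs{\norm{\xi}-\norm{\eta}}\le\norm{\xi-\eta}$, I get for $\theta = 1/2-a \in [0,1/4)$
\[
  \bigabs{e^{\sigma\norm{\xi}}-e^{\sigma\norm{\eta}}} \le C\sigma^{1/2-a}\norm{\xi-\eta}^{1/2-a}\bigl(e^{\sigma\norm{\xi}}+e^{\sigma\norm{\eta}}\bigr).
\]
The $\sigma^{1/2-a}$ factor comes out front, the exponential $e^{\sigma\norm{\xi}}$ or $e^{\sigma\norm{\eta}}$ is absorbed into the analytic weight on one of the spinorial factors, and the loss $\norm{\xi-\eta}^{1/2-a}$ is paid for as a regularity loss on $\phi$ (downgrading $G^{\sigma,1/2}$ to $G^{\sigma,a}$, still positive because $a > 1/4$). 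The reduced trilinear form retains the spinorial null structure \eqref{NullEstimate}--\eqref{Commutation} and is estimated by Theorem \ref{TrilinearTheorem}, which supplies a time-loss factor $\delta^p$ with $p = \min(a,3(a-1/4))$. Combining with the Bourgain-space linear bound $\fixednorm{\psi_\pm}_{X^{\sigma,0,b}_\pm}\lesssim\mathfrak M_\sigma(0)^{1/2}$ and the Duhamel bound on $\phi_+$ from the proof of Theorem \ref{LWPa} yields \eqref{Mest}.

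\textbf{The $\phi$-estimate \eqref{Nest}.} Setting $\Phi_+ = e^{\sigma\norm{D}}\phi_+$, a parallel computation for $\fixednorm{\Phi_+}_{H^{1/2}}^2$ gives
\[
  \mathfrak N_\sigma(t)^2 - \mathfrak N_\sigma(0)^2 = -2\im \int_0^t\int_{\R^2} \overline{\Phi_+}\,e^{\sigma\norm{D}}\innerprod{\beta\psi}{\psi}\,dx\,ds.
\]
This time the non-commutator ($\sigma = 0$) part does \emph{not} vanish, because $\Phi_+$ is complex-valued while $\innerprod{\beta\psi}{\psi}$ is real. Splitting $e^{\sigma\norm{D}} = I + (e^{\sigma\norm{D}}-I)$, the $I$-piece is controlled by Cauchy--Schwarz in $t$, the standard null-form bound $\fixednorm{\innerprod{\beta\psi}{\psi}}_{L^2_t H^{-1/2}_x}\lesssim\fixednorm{\psi}_{L^\infty_t L^2_x}^2$, and the \emph{exact} charge conservation \eqref{ChargeCons}, giving the non-analytic contribution $c\delta^{1/2}\mathfrak N_\sigma\fixednorm{\psi(0)}_{L^2}^2$. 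The $(e^{\sigma\norm{D}}-I)$-piece is handled via the symbol bound $\bigabs{e^{\sigma\norm{\xi}}-1}\le C\sigma^{1/2-a}\norm{\xi}^{1/2-a}e^{\sigma\norm{\xi}}$ (the same interpolation with $B = 0$) and Theorem \ref{TrilinearTheorem}, contributing $c\delta^p\sigma^{1/2-a}\mathfrak N_\sigma\mathfrak M_\sigma(0)$. Writing $\mathfrak N_\sigma^2 - \mathfrak N_\sigma(0)^2 = (\mathfrak N_\sigma + \mathfrak N_\sigma(0))(\mathfrak N_\sigma - \mathfrak N_\sigma(0))$ and dividing by $\mathfrak N_\sigma + \mathfrak N_\sigma(0)$ (with a short bootstrap to cover the degenerate case $\mathfrak N_\sigma(0) = 0$) yields \eqref{Nest}.

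\textbf{Main obstacle.} The nontrivial work is the trilinear step: Theorem \ref{TrilinearTheorem} must simultaneously absorb the extra $\norm{\xi-\eta}^{1/2-a}$ weight, accommodate the exponential weight $e^{\sigma\max}$, and still deliver the sharp time-decay $\delta^p$ with $p = \min(a,3(a-1/4))$. This forces a dyadic Littlewood--Paley case analysis in which the spinorial null-structure gain $\vangle(\xi,\eta)$ from \eqref{NullEstimate} is pitted against the derivative loss across the high-high, high-low and low-low frequency regimes; the dichotomy in $p$ reflects the two distinct scalings at which the time-loss is saturated, the second of which is responsible for the algebraic-rate barrier flagged in Remark \ref{AlgebraicRemark}.
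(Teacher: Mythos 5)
Your treatment of \eqref{Mest} is essentially the paper's proof: the same energy identity for $\Psi_\pm=e^{\sigma\norm{D}}\psi_\pm$, the same observation that the free, mass and non-commutator terms drop out by realness, the same commutator symbol bound $\abs{e^{A}-e^{B}}\lesssim\sigma^{\theta}\norm{\xi-\eta}^{\theta}e^{\max}$ with $\theta=1/2-a$ spending the loss on the $H^{1/2}$ regularity of $\phi_+$, and the same appeal to the trilinear null-form estimates. (Two small points: the factor $\delta^{p}$ comes from Lemma \ref{CutoffLemma} applied after the sharp time cutoff of Lemma \ref{SharpTimeRestrictionLemma}, not from Theorem \ref{TrilinearTheorem} itself — the dichotomy in $p$ reflects the competing constraints $b_j\ge\max(1/4,3/4-a)$ and $b_0+b_1+b_2\ge 3/2-a$; and you should say explicitly that the sharp cutoff $\chi_{[0,T]}$ is admissible only because all the $b$-exponents are strictly below $1/2$.)

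The proof of \eqref{Nest}, however, has a genuine gap. You replace the paper's route (apply the linear Duhamel estimate, Corollary \ref{IVPcor}, to the $\phi_+$ equation, then estimate the source $\angles{D}^{-1}\innerprod{\beta\psi}{\psi}$ in $X^{\sigma,1/2,-b_0;\infty}_{+}$ via Corollary \ref{BilinearCor}) by an energy identity for $\mathfrak N_\sigma^2$ followed by division by $\mathfrak N_\sigma+\mathfrak N_\sigma(0)$. For the division to produce \eqref{Nest}, the factor you pull out of the space-time pairing must be $\sup_t\norm{\Phi_+(t)}_{H^{1/2}}$, and to achieve that you invoke the bound $\norm{\innerprod{\beta\psi}{\psi}}_{L^2_tH^{-1/2}_x}\lesssim\norm{\psi}_{L^\infty_tL^2_x}^2$. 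This bound is false, already at fixed time: it would require $L^1(\R^2)\hookrightarrow H^{-1/2}(\R^2)$, i.e.\ $H^{1/2}(\R^2)\hookrightarrow L^\infty$, which fails. The correct estimates available (Theorem \ref{TrilinearTheorem} / Corollary \ref{BilinearCor}) require the $\phi$-factor to carry a positive modulation weight $b_0\ge\max(1/4,3/4-a)$, so the pairing can only be closed using the Bourgain norm $\norm{\phi_+}_{X^{\sigma,1/2,1/2;1}_+(\delta)}\le c\bigl(\mathfrak M_\sigma(0)^{1/2}+\mathfrak N_\sigma(0)\bigr)$, not $\sup_t\mathfrak N_\sigma(t)$; and after dividing by $\mathfrak N_\sigma+\mathfrak N_\sigma(0)$ the quotient $\bigl(\mathfrak M_\sigma(0)^{1/2}+\mathfrak N_\sigma(0)\bigr)/\bigl(\mathfrak N_\sigma(T)+\mathfrak N_\sigma(0)\bigr)$ is unbounded when $\mathfrak N_\sigma(0)$ is small relative to $\mathfrak M_\sigma(0)^{1/2}$, so you do not recover \eqref{Nest}. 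The fix is simply to drop the squared energy identity and use the linear estimate of Corollary \ref{IVPcor} directly, splitting the source into low frequencies $\norm{\xi}\le\sigma^{-1}$ (where the Gevrey weight is $O(1)$ and the $\sigma=0$ bilinear estimate gives $c\delta^{1/2}\norm{\psi(0,\cdot)}_{L^2}^2$) and high frequencies (where $1\le(\sigma\norm{\xi})^{\theta}$ extracts $\sigma^{1/2-a}\angles{D}^{1/2-a}$ and one applies the bilinear estimate with $a$ in place of $1/2$).
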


We now have all the tools needed to prove the main result, Theorem \ref{Thm1}.

\subsection{Proof of Theorem \ref{Thm1}}\label{MainProof}

Without loss of generality we restrict attention to $t \ge 0$. We must prove the lower bound $\sigma(t) \ge \sigma_0 e^{-At}$ for all $t \ge 0$, for some constant $A > 0$ depending on $\sigma_0$ and the norm of the data. But by Theorem \ref{LWPa} there exists $t_0 > 0$ such that $\sigma(t) \ge \sigma_0$ for all $t \in [0,t_0]$, hence it suffices to show a lower bound $\sigma(t) \ge c e^{-Bt}$ for some constants $c,B > 0$ depending on $\sigma_0$ and the data norm.  We split the proof into two steps.

Fix $a \in (1/4,1/2)$, define $p$ by \eqref{p} and set $q=1/2-a$ and $r = (3/2-p)/q$. Let $c_0$ and $c$ be the constants from Theorems \ref{LWPa} and \ref{ApproxCons}. We will denote by
\[
  K = \norm{\psi_0}_{L^2}^2
\]
the conserved charge \eqref{ChargeCons}. We fix $R_0 \ge 1$ so large that $\sigma_0^q R_0^{3/2-p} \ge 1$, $c c_0^{1/2} K \le R_0$ and $11^{3/2} c c_0^p \le R_0$.

\subsubsection{Step 1} Let $R \ge R_0$ be so large that
\[
  \mathfrak M_{\sigma_0}(0) + \mathfrak N_{\sigma_0}(0)^2 \le R,
\]
and set
\[
  \delta = \frac{c_0}{12R},
\]
where $c_0$ is as in the local existence theorem, Theorem \ref{LWPa}. Iterating that result, with $\sigma_0$ replaced by a parameter $\sigma \in (0,\sigma_0]$, we cover successive time intervals $[0,\delta]$, $[\delta,2\delta]$ etc. In fact, we choose $\sigma$ so that $\sigma^q R^{3/2-p} = 1$, that is,
\[
  \sigma = R^{-r} = R^{-(3/2-p)/q}.
\]
Proceeding inductively, let us assume that for some $n \in \N$ we have
\[
  \sup_{t \in [0,(n-1)\delta]} \left( \mathfrak M_{\sigma}(t) + \mathfrak N_{\sigma}(t)^2 \right) \le 11R.
\]
Then by Theorem \ref{LWPa} (with $\sigma_0$ replaced by $\sigma$) we can extend the solution to $[0,n\delta]$, and by Theorem \ref{ApproxCons},
\begin{align*}
  \sup_{t \in [0,n\delta]} \mathfrak M_\sigma(t)
  &\le
  \mathfrak M_\sigma(0) + nc \delta^p \sigma^q (11R)^{3/2},
  \\
  \sup_{t \in [0,n\delta]} \mathfrak N_\sigma(t)
  &\le
  \mathfrak N_\sigma(0) + ncK \delta^{1/2} + nc \delta^p \sigma^q 11R.
\end{align*}
Since $\mathfrak M_\sigma(0) \le \mathfrak M_{\sigma_0}(0) \le R$ and $\mathfrak N_\sigma(0) \le \mathfrak N_{\sigma_0}(0) \le R^{1/2}$, we then get
\begin{align*}
  \sup_{t \in [0,n\delta]} \mathfrak M_\sigma(t)
  &\le
  R + nc \delta^p \sigma^q (11R)^{3/2},
  \\
  \sup_{t \in [0,n\delta]} \mathfrak N_\sigma(t)
  &\le
  R^{1/2} + ncK\delta^{1/2} + nc \delta^p \sigma^q 11R.
\end{align*}
Thus, if
\begin{equation}\label{Rcond}
  nc \delta^p \sigma^q 11^{3/2} R^{1/2} \le 1
  \quad \text{and} \quad
  ncK\delta^{1/2} \le R^{1/2},
\end{equation}
it follows that
\[
  \sup_{t \in [0,n\delta]} \left( \mathfrak M_{\sigma}(t) + \mathfrak N_{\sigma}(t)^2 \right) \le 11R.
\]
Note that \eqref{Rcond} certainly holds for $n=1$, by the choice $\sigma=R^{-r}$, $R \ge R_0$ and the assumptions on $R_0$.

Setting $T=n\delta$ and using $\delta=c_0/12R$ and $\sigma^q R^{3/2-p} = 1$, we rewrite \eqref{Rcond} as
\[
  T \cdot \max\left( 11^{3/2} c \left(\frac{c_0}{12}\right)^{p-1}, c \left(\frac{c_0}{12}\right)^{-1/2} K \right) \le 1.
\]

The induction stops at time $T=n\delta$, where $n$ is the largest natural number such that \eqref{Rcond} holds. It follows that
\[
  T \ge T_0 := \frac{1}{2\mu}, \quad \text{where} \quad \mu = \max\left( 11^{3/2} c \left(\frac{c_0}{12}\right)^{p-1}, c \left(\frac{c_0}{12}\right)^{-1/2} K \right) > 0.
\]
Indeed, since \eqref{Rcond} fails when $n$ is replaced by $n+1$, we have $1 < (T+\delta)\mu \le 2T\mu$.

To summarize, what we have proved in Step 1 is that there exists $T_0 > 0$, depending only on $a$, $\mu$ and the conserved charge, such that for any $R \ge R_0$ and for any data at $t=0$ satisfying $\mathfrak M_{\sigma_0}(0) + \mathfrak N_{\sigma_0}(0)^2 \le R$, the solution has radius of analyticity at least $\sigma = R^{-r}$ for all $t \in [0,T_0]$, and we have the final-time bound $\mathfrak M_{\sigma}(T_0) + \mathfrak N_{\sigma}(T_0)^2 \le 11R$.

\subsubsection{Step 2}

We iterate the result of Step 1. Proceeding inductively we cover intervals $[(n-1)T_0,nT_0]$ for $n=1,2,\dots$, on each of which the radius of analyticity is at least
\[
  \sigma = \sigma_n = (11^{n-1}R)^{-r}
\]
and we have the final-time bound
\[
  \mathfrak M_{\sigma_n}(nT_0) + \mathfrak N_{\sigma_n}(nT_0)^2 \le 11^n R.
\]
Thus
\[
  \sigma(t) \ge R^{-r} e^{-(\ln 11/T_0)t}
\]
for $t \ge 0$, as desired. This concludes the proof of Theorem \ref{Thm1}.

\section{Function spaces}\label{Spaces}

We impose the convention that the letters $N$ and $L$ (and indexed versions of these) denote elements of the set of dyadic numbers $2^{\N_0} = \left\{ 2^n \colon n \in \N_0 \right\}$, and that sums, unions and supremums over $N$ or $L$ are tacitly understood to be restricted to this set. Define disjoint dyadic sets $S_N$ by
\[
  S_1 = (-1,1), \qquad S_{2^n} = (-2^n,-2^{n-1}] \cup [2^{n-1},2^n) \quad \text{for $n=1,2,\dots$},
\]
so that $\R = \cup_{N} S_N$.

Note that each equation in \eqref{DKG} is of the form $(-i\partial_t + h(D))u=F$, with $h(\xi)=\pm\abs{\xi}$ or $\pm\angles{\xi}$. In general, given a continuous $h \colon \R^2 \to \R$ of polynomial growth we consider the family of norms, for $\sigma \ge 0$, $s,b \in \R$ and $1 \le p < \infty$,
\[
  \norm{u}_{X^{\sigma,s,b;p}_{h(\xi)}}
  =
  \left( \sum_{L} L^{bp} \norm{ e^{\sigma\norm{\xi}} \angles{\xi}^s \chi_{S_L}(\tau+h(\xi))
  \widehat u(\tau,\xi)}_{L^2_{\tau,\xi}}^p \right)^{1/p},
\]
and for $p=\infty$,
\[
  \norm{u}_{X^{\sigma,s,b;\infty}_{h(\xi)}}
  =
  \sup_{L} L^b \norm{ e^{\sigma\norm{\xi}} \angles{\xi}^s \chi_{S_L}(\tau+h(\xi))
  \widehat u(\tau,\xi)}_{L^2_{\tau,\xi}}.
\]
Here $\chi_{S_L}$ denotes the characteristic function of $S_L$ and
\[
  \widehat u(\tau,\xi) = \mathcal F u(\tau,\xi) = \int_{\R \times \R^2} e^{-i(t\tau+x\cdot\xi)} u(t,x) \, dt \, dx \qquad (\tau \in \R,\, \xi \in \R^2)
\]
is the space-time Fourier transform. The above norms are the analytic counterparts of the norms used in \cite{GP2010}, the only difference being that we insert the exponential factor $e^{\sigma\norm{\xi}}$.

\begin{definition}
Let $\sigma \ge 0$, $s,b \in \R$ and $1 \le p \le \infty$. The space $X^{\sigma,s,b;p}_{h(\xi)}$ is the set of $u \in \mathcal S'(\R^{1+2})$ such that $\widehat u \in L^1_{\mathrm{loc}}(\R^{1+2})$ and $\norm{u}_{X^{\sigma,s,b;p}_{h(\xi)}} < \infty$. In the case $\sigma=0$ we simplify the notation to $X^{s,b;p}_{h(\xi)} = X^{0,s,b;p}_{h(\xi)}$.
\end{definition}

We define multipliers $P_N$ and $Q_L = Q_L^{h(\xi)}$ by
\begin{align*}
  \widehat{P_N u}(\tau,\xi) &= \chi_{S_N}(\abs{\xi}) \widehat u(\tau,\xi),
  \\
  \widehat{Q_L u}(\tau,\xi) &= \chi_{S_L}(\tau+h(\xi)) \widehat u(\tau,\xi).
\end{align*}
We also write $Q_{\le L_0} = \sum_{L \le L_0} Q_L$ and $Q_{> L_0} = \text{Id} - Q_{\le L_0}$, and similarly for $P$. For convenience we shall use the shorthand $u_N = P_N u$, $u_L = Q_L u$ and $u_{N,L} = P_N Q_L u$.

It is easy to see that the norms corresponding to $h(\xi)=\pm\abs{\xi}$ and $h(\xi) = \pm\angles{\xi}$ are comparable. The spaces $X^{\sigma,s,b;p}_{\pm\abs{\xi}}$ and $X^{\sigma,s,b;p}_{\pm\angles{\xi}}$ therefore coincide and have equivalent norms, so for our purposes they can be used interchangeably and we will denote either of them by $X^{\sigma,s,b;p}_{\pm}$. We will also write $Q_L^{\pm} = Q_L^{\pm\abs{\xi}}$.

We now discuss the main properties of the above spaces. For this purpose it is just as well to work in the general setting of a given continuous $h \colon \R^2 \to \R$ of polynomial growth, and for the remainder of this section we fix such a function.

\begin{lemma}
$X^{\sigma,s,b;p}_{h(\xi)}$ is a Banach space.
\end{lemma}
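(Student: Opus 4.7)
The plan is to realize $X := X^{\sigma,s,b;p}_{h(\xi)}$ isometrically as a closed subspace of the Banach space $\ell^p_L \bigl( L^2_{\tau,\xi}(\R^{1+2}) \bigr)$, with the outer $\ell^p$ taken over the dyadic index $L$; completeness of $X$ then follows automatically. The norm axioms themselves are routine: subadditivity and homogeneity descend from $L^2$ and $\ell^p$, while definiteness uses that $\{S_L\}_L$ partitions $\R$, so $\norm{u}_X = 0$ forces $\chi_{S_L}(\tau+h(\xi))\hat u = 0$ a.e.\ for every $L$ and hence $\hat u = 0$ in $L^1_{\mathrm{loc}}$, giving $u = 0$ in $\mathcal S'$.

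The key construction is the linear map
\[
  Tu = \bigl( L^b e^{\sigma\norm{\xi}}\angles{\xi}^s \chi_{S_L}(\tau+h(\xi))\,\hat u(\tau,\xi) \bigr)_L,
\]
which by the definition of the $X$-norm is an isometry $T : X \hookrightarrow \ell^p_L(L^2_{\tau,\xi})$. Its range lies inside
\[
  V = \bigl\{ (v_L)_L \in \ell^p_L(L^2_{\tau,\xi}) : \supp v_L \subset \{(\tau,\xi) : \tau+h(\xi) \in S_L\} \bigr\},
\]
which is closed because $L^2$-convergence preserves almost-everywhere support in a fixed measurable set. For a Cauchy sequence $(u_n) \subset X$, the image sequence $(Tu_n)$ is Cauchy in $\ell^p_L(L^2_{\tau,\xi})$ and hence converges to some $(v_L) \in V$; dividing out the bounded weight $L^{-b}e^{-\sigma\norm{\xi}}\angles{\xi}^{-s}$ component-wise gives $\chi_{S_L}(\tau+h(\xi))\hat u_n \to L^{-b}e^{-\sigma\norm{\xi}}\angles{\xi}^{-s} v_L$ in $L^2$ for every $L$.

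The main obstacle is to assemble the $L$-pieces into a single $X$-limit. I would set $F(\tau,\xi) = \sum_L L^{-b}e^{-\sigma\norm{\xi}}\angles{\xi}^{-s} v_L(\tau,\xi)$, which is pointwise well-defined by disjointness of the strips, and $L^1_{\mathrm{loc}}$ because on any compact set continuity of $h$ makes only finitely many $L$ contribute. The temperedness of $F$ is the genuinely technical point: I would test against a Schwartz function $\psi$, absorb the weight $e^{-\sigma\norm{\xi}}\angles{\xi}^{-s}$ onto $\psi$, and use the Schwartz decay of $\psi$ together with the polynomial growth of $h$ to bound the $L$-th strip contribution by $L^{-N}$ for any desired $N$, so that H\"older in $\ell^p$--$\ell^{p'}$ sums the strips and shows that $(\hat u_n)$ is Cauchy in $\mathcal S'$. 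The limit defines $F = \hat u$ with $u \in \mathcal S'$; verifying $u \in X$ and $Tu = (v_L)$ is then immediate from the disjoint-support representation of $F$. The case $p = \infty$ goes through identically, with $\ell^\infty$ in place of $\ell^p$.
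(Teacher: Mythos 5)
Your proposal is correct and follows essentially the same route as the paper: an isometric identification of $X^{\sigma,s,b;p}_{h(\xi)}$ with a closed, support-constrained subspace of $\ell^p_L(L^2)$, with the reassembled function shown to be tempered via the polynomial growth of $h$ (the paper absorbs the weight $e^{\sigma\norm{\xi}}\angles{\xi}^{s}$ into the target space $Y$ rather than into the map, which is an immaterial difference). Your temperedness argument is just a spelled-out version of the step the paper declares ``easy to see.''
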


\begin{proof}
It suffices to exhibit an isometric isomorphism $u \mapsto g = (g_L)_{L \in 2^{\N_0}}$ from $X^{\sigma,s,b;p}_{h(\xi)}$ onto a closed subspace $\mathcal M$ of  $l^p(2^{\N_0};Y)$, where $Y = L^2(e^{2\sigma\norm{\xi}} \angles{\xi}^{2s} \, d\tau \, d\xi)$. The map is given by $g_L(\tau,\xi) = L^b \widehat{u_L}(\tau,\xi)$, and $\mathcal M$ is the subspace of all $g = (g_L) \in l^p(2^{\N_0};Y)$ such that each $g_L$ is supported in $A_L = \{ (\tau,\xi) \colon \tau+h(\xi) \in S_L\}$.

To prove that the map is onto $\mathcal M$, let $g \in \mathcal M$ and define $U \colon \R^{1+2} \to \C$ by $U(\tau,\xi) = L^{-b} g_L(\tau,\xi)$ for $(\tau,\xi) \in A_L$. By the assumption that $h(\xi)$ has polynomial growth, it is easy to see that $U$ is a tempered function, hence $u = \mathcal F^{-1}U$ is well defined and belongs to $X^{\sigma,s,b;p}_{h(\xi)}$.
\end{proof}

\begin{lemma}\label{Emb}
$X^{\sigma,s,1/2;1}_{h(\xi)}$ embeds continuously into $C(\R;G^{\sigma,s})$.
\end{lemma}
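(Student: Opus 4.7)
The plan is to exhibit, for each $u \in X^{\sigma,s,1/2;1}_{h(\xi)}$, a representative in $C(\R;G^{\sigma,s})$ satisfying $\sup_{t} \norm{u(t,\cdot)}_{G^{\sigma,s}} \le C \norm{u}_{X^{\sigma,s,1/2;1}_{h(\xi)}}$. The natural decomposition is the dyadic modulation splitting $u = \sum_L u_L$ with $u_L = Q_L u$, and I would estimate each piece separately and then sum.

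For the pointwise bound, I would use the inverse Fourier transform in $\tau$: for fixed $\xi$,
\[
  \mathcal F_x u_L(t,\xi) = \frac{1}{2\pi}\int e^{it\tau}\,\widehat{u_L}(\tau,\xi)\,d\tau,
\]
and since $\widehat{u_L}(\cdot,\xi)$ is supported where $\tau + h(\xi) \in S_L$, a set of $\tau$-measure $\lesssim L$, Cauchy-Schwarz in $\tau$ yields
\[
  \bigabs{\mathcal F_x u_L(t,\xi)} \lesssim L^{1/2} \bignorm{\widehat{u_L}(\cdot,\xi)}_{L^2_\tau}
\]
uniformly in $t$. Multiplying by $e^{\sigma\norm{\xi}}\angles{\xi}^s$, taking $L^2_\xi$ norms, and invoking the triangle inequality over $L$ gives
\[
  \sup_{t \in \R} \norm{u(t,\cdot)}_{G^{\sigma,s}} \le \sum_L \sup_{t} \norm{u_L(t,\cdot)}_{G^{\sigma,s}} \lesssim \sum_L L^{1/2} \bignorm{e^{\sigma\norm{\xi}}\angles{\xi}^s \chi_{S_L}(\tau+h(\xi))\widehat{u}}_{L^2_{\tau,\xi}} = \norm{u}_{X^{\sigma,s,1/2;1}_{h(\xi)}},
\]
which is the desired quantitative embedding.

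For continuity, I would first show each $u_L$ belongs to $C(\R;G^{\sigma,s})$. For fixed $\xi$, $t \mapsto \mathcal F_x u_L(t,\xi)$ is continuous by dominated convergence in the $\tau$-integral (the integrand is bounded by $\abs{\widehat{u_L}(\tau,\xi)}\in L^1_\tau$ thanks to the $\tau$-compactness of the support combined with Cauchy-Schwarz). Promoting this to $L^2_\xi$-continuity is another application of dominated convergence: the integrand $\abs{\mathcal F_x u_L(t_n,\xi) - \mathcal F_x u_L(t,\xi)}^2 e^{2\sigma\norm{\xi}}\angles{\xi}^{2s}$ tends to zero pointwise in $\xi$ and is dominated by a fixed $L^1_\xi$ function coming from the bound above. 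Finally, the triangle-inequality estimate of the previous paragraph shows that $\sum_L u_L$ converges uniformly in $t$ in $G^{\sigma,s}$, so the limit $u$ inherits continuity. No serious obstacles here; the only mild point is writing the decomposition $\sum_L u_L$ and the $\tau$-inversion rigorously for a general tempered distribution in the class, which is handled by the a priori fact that $\widehat u \in L^1_{\text{loc}}$ (from the definition of $X^{\sigma,s,b;p}_{h(\xi)}$) together with the weighted $L^1_\tau L^2_\xi$ control just established.
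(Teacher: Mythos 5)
Your argument is correct and coincides with the paper's: the same dyadic modulation decomposition, the same Cauchy--Schwarz in $\tau$ over the support of measure $\sim L$ producing the factor $L^{1/2}$, and the same summation over $L$. For continuity the paper simply inserts the factor $e^{i\tau h}-1$ into the right-hand side and applies dominated convergence once, which is a slightly more compact packaging of your piecewise-continuity-plus-uniform-convergence argument, but the substance is identical.
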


\begin{proof}
This follows from
\[
  \norm{u(t)}_{G^{\sigma,s}}
  \le \sum_L \norm{e^{\sigma\norm{\xi}} \angles{\xi}^s \int \abs{\widehat{u_L}(\tau,\xi)}\, d\tau}_{L_{\xi}^2}
  \le \sum_L L^{1/2} \norm{e^{\sigma\norm{\xi}} \angles{\xi}^s \widehat{u_L}(\tau,\xi)}_{L_{\tau,\xi}^2}.
\]
Similarly, we bound $\norm{u(t+h)-u(t)}_{G^{\sigma,s}}$ by the right-hand side with the factor $e^{i\tau h}-1$ inserted in the $L^2_{\tau,\xi}$ norm, and the resulting expression converges to zero as $h \to 0$, by the dominated convergence theorem.
\end{proof}

\begin{lemma}\label{ConvergenceLemma}
Assume $1 \le p < \infty$ and let $u \in X^{\sigma,s,b;p}_{h(\xi)}$. Then $u = \sum_L u_L$ and $u=\sum_{N,L} u_{N,L}$ hold in $X^{\sigma,s,b;p}_{h(\xi)}$. If $p=\infty$, the convergence holds in $\mathcal S'$.
\end{lemma}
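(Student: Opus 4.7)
The strategy is to leverage the fact that the cutoffs $\{\chi_{S_L}(\tau+h(\xi))\}_L$ and $\{\chi_{S_N}(\abs{\xi})\}_N$ are pairwise disjoint and partition $\R^{1+2}$ up to a null set. Writing
\[
  G_L(\tau,\xi) = e^{\sigma\norm{\xi}} \angles{\xi}^s \chi_{S_L}(\tau+h(\xi)) \widehat u(\tau,\xi),
\]
we have $\norm{u}_{X^{\sigma,s,b;p}_{h(\xi)}}^p = \sum_L L^{bp} \norm{G_L}_{L^2}^p$ for $p<\infty$ and $\sup_L L^b \norm{G_L}_{L^2} = \norm{u}_{X^{\sigma,s,b;\infty}_{h(\xi)}}$ for $p=\infty$. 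For any increasing exhaustion of index sets, the partial sum is exactly the multiplier obtained by restricting $\widehat u$ to the corresponding union of dyadic shells, so the proof reduces to controlling tails on the Fourier side.

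For the single sum with $1 \le p < \infty$, the identity $\sum_{L \le L_0} u_L = Q_{\le L_0} u$ yields
\[
  \bignorm{u - \sum_{L \le L_0} u_L}_{X^{\sigma,s,b;p}_{h(\xi)}}^p = \sum_{L > L_0} L^{bp} \norm{G_L}_{L^2}^p,
\]
a tail of a convergent series, hence vanishing as $L_0 \to \infty$. For the double sum, exhaust $2^{\N_0} \times 2^{\N_0}$ by $\Lambda_K = \{(N,L) : N, L \le 2^K\}$; by orthogonality in $N$ (the $\chi_{S_N}(\abs{\xi})$ being pairwise disjoint),
\[
  \bignorm{u - \sum_{(N,L) \in \Lambda_K} u_{N,L}}^p = \sum_{L > 2^K} L^{bp} \norm{G_L}_{L^2}^p + \sum_{L \le 2^K} L^{bp} \Bigl(\sum_{N > 2^K} \norm{\chi_{S_N}(\abs{\xi}) G_L}_{L^2}^2\Bigr)^{p/2}.
\]
The first term is again a tail; in the second, the summands are dominated by $L^{bp} \norm{G_L}_{L^2}^p$, which is summable in $L$, while for each fixed $L$ the inner sum tends to $0$ as $K \to \infty$. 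Dominated convergence over $L$ finishes the argument.

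For $p=\infty$, pair with a test function $\varphi \in \mathcal S(\R^{1+2})$. By Parseval and Cauchy--Schwarz on each dyadic shell in $\tau + h(\xi)$,
\[
  \bigabs{\innerprod{u - Q_{\le L_0} u}{\varphi}} \lesssim \sum_{L > L_0} \norm{G_L}_{L^2}\, \bignorm{ e^{-\sigma\norm{\xi}} \angles{\xi}^{-s} \chi_{S_L}(\tau+h(\xi)) \widehat \varphi}_{L^2}.
\]
Here $\norm{G_L}_{L^2} \le L^{-b} \norm{u}_{X^{\sigma,s,b;\infty}_{h(\xi)}}$. Since $h$ has polynomial growth, the region $\tau + h(\xi) \in S_L$ for $L \gg 1$ forces $\max(\abs{\tau}, \abs{\xi})$ to be at least a positive power of $L$; combined with the rapid decay of $\widehat \varphi$ (and the harmless weights $e^{-\sigma\norm{\xi}} \le 1$ and $\angles{\xi}^{-s}$, which is polynomial), the second factor decays faster than any polynomial in $L$, and the geometric-type summability in $L$ yields the estimate. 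The double sum is handled similarly: the complement of $\Lambda_K$ is contained in $\{\abs{\xi} > 2^{K-1}\} \cup \{\abs{\tau+h(\xi)} > 2^{K-1}\}$, and rapid decay of $\widehat\varphi$ disposes of both regions.

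The main obstacle is the technical point in the $p=\infty$ case: one must check that the dyadic shells carved out by the moving hyperplane $\tau = -h(\xi)$ do not spoil the rapid decay of Schwartz test functions. This is the sole place where the polynomial-growth hypothesis on $h$ is needed; elsewhere the proof is formal manipulation of disjoint Fourier supports and summability of tails.
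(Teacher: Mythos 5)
Your proof is correct and follows essentially the same route as the paper's: identify the Fourier transform of the error as $\widehat u$ restricted to the complementary region, bound the tail in $L$ (and the high-$\xi$ piece for the double sum) by dominated convergence when $p<\infty$, and for $p=\infty$ test against a Schwartz function and use dominated convergence on the Fourier side of the Plancherel pairing. Your treatment of the $p=\infty$ case merely spells out the details (Cauchy--Schwarz per modulation shell plus the polynomial-growth hypothesis on $h$) that the paper leaves implicit.
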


\begin{proof}
The Fourier transforms of $u-\sum_{L \le L_0} Q_L u$ and $u-\sum_{N \le N_0} \sum_{L \le L_0} P_N Q_L u$ equal $\widehat u$ multiplied by the characteristic functions of the regions, respectively, (i) $\abs{\tau+h(\xi)} \ge L_0$ and (ii) $\abs{\tau+h(\xi)} \ge L_0$ or $\abs{\xi} \ge N_0$. If $p < \infty$, the $X^{\sigma,s,b;p}_{h(\xi)}$ norm is therefore bounded by
\[
  \left( \sum_{L \ge L_0} L^{bp} \norm{ e^{\sigma\norm{\xi}} \angles{\xi}^s \widehat{u_L}}_{L^2}^p \right)^{1/p}
  +
  \left( \sum_{L < L_0} L^{bp} \norm{ e^{\sigma\norm{\xi}} \angles{\xi}^s \chi_{\abs{\xi} \ge N_0}\widehat{u_L}}_{L^2}^p \right)^{1/p},
\]
and these terms are arbitrarily small for $L_0$ and $N_0$ large enough, by the dominated convergence theorem. If $p=\infty$, the convergence in $\mathcal S'$ follows from dominated convergence on the Fourier side of the Plancherel identity (see \eqref{DualityPairing} below) when $u$ is tested on any $v \in \mathcal S$.
\end{proof}

We remark that the Schwartz class $\mathcal S(\R^{1+2})$ is contained in $X^{\sigma,s,b;p}_{h(\xi)}$ if $\sigma=0$, but not if $\sigma > 0$. Recall that we simplify the notation to $X^{0,s,b;p}_{h(\xi)} = X^{s,b;p}_{h(\xi)}$ when $\sigma=0$. We now prove some density and duality results for this case.

\begin{lemma}\label{DensityLemma}
$\mathcal S$ is dense in $X^{s,b;p}_{h(\xi)}$ if $1 \le p < \infty$, but not if $p=\infty$.
\end{lemma}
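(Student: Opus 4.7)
The plan is to handle the two assertions separately.

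For the positive claim $1 \le p < \infty$: By Lemma \ref{ConvergenceLemma}, it suffices to approximate each finite partial sum $v = \sum_{N \le N_0,\, L \le L_0} u_{N,L}$ by Schwartz functions. Such a $v$ has Fourier transform supported in the bounded set $K = \{ (\tau,\xi) : \abs{\xi} \le 2N_0, \; \abs{\tau+h(\xi)} \le 2L_0 \}$ (bounded because $h$ is continuous), and $\widehat v$ lies in $L^2(K)$. Mollifying with a standard $C_c^\infty(\R^{1+2})$ mollifier $\rho_\varepsilon$ gives $\widehat{v_\varepsilon} := \widehat v * \rho_\varepsilon \in C_c^\infty \subset \mathcal S$, hence $v_\varepsilon := \mathcal F^{-1}\widehat{v_\varepsilon} \in \mathcal S$, with $\widehat{v_\varepsilon} \to \widehat v$ in $L^2$. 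Since $\widehat{v_\varepsilon} - \widehat v$ is supported in a fixed bounded enlargement $K' \supset K$, only finitely many dyadic pairs $(N,L)$ meet this support and on $K'$ the weight $\angles{\xi}^s$ is bounded. Therefore $\norm{v_\varepsilon - v}_{X^{s,b;p}_{h(\xi)}}$ is dominated by a constant times $\norm{\widehat{v_\varepsilon} - \widehat v}_{L^2}$, which tends to $0$.

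For the negative claim $p = \infty$: The key observation is that for every $u \in \mathcal S$,
\[
  a_L(u) := L^b \bignorm{ \angles{\xi}^s \chi_{S_L}(\tau+h(\xi)) \widehat u }_{L^2_{\tau,\xi}} \longrightarrow 0 \quad \text{as } L \to \infty.
\]
Indeed, on $S_L$ one has $\angles{\tau+h(\xi)} \gtrsim L$, so for any integer $M > b$ one obtains
\[
  a_L(u)^2 \lesssim L^{2b-2M} \int_{\R^{1+2}} \angles{\xi}^{2s} \angles{\tau+h(\xi)}^{2M} \bigabs{\widehat u(\tau,\xi)}^2 \, d\tau \, d\xi,
\]
and the last integral is finite because $h$ has polynomial growth and $\widehat u \in \mathcal S$. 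By the triangle inequality, this decay at infinity is inherited by every element of the $X^{s,b;\infty}_{h(\xi)}$-closure of $\mathcal S$. To exhibit an element of $X^{s,b;\infty}_{h(\xi)}$ outside this closure, fix a bump $\phi \in C_c^\infty(\R^2)$ supported in $\{1 \le \abs{\xi} \le 2\}$ and the locally bounded function $g \colon \R \to [0,\infty)$ equal to $L^{-b-1/2}$ on each $S_L$, and define $\widehat u(\tau,\xi) = \angles{\xi}^{-s} \phi(\xi) g(\tau+h(\xi))$ (which is tempered and locally integrable). The change of variables $\tau' = \tau+h(\xi)$ yields
\[
  a_L(u) = \norm{\phi}_{L^2} \cdot L^b \bignorm{\chi_{S_L} g}_{L^2_{\tau'}} \simeq \norm{\phi}_{L^2}
\]
uniformly in $L$, so $u \in X^{s,b;\infty}_{h(\xi)}$ but $u \notin \overline{\mathcal S}$.

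The main obstacle is the negative part: one must verify carefully that the Schwartz decay genuinely beats the dyadic weight $L^b$ even after the change of frame $\tau \mapsto \tau+h(\xi)$, and then produce a counterexample whose Fourier transform is tempered and locally integrable and for which $a_L$ stays bounded away from $0$. The positive part is a routine mollification once Lemma \ref{ConvergenceLemma} is invoked.
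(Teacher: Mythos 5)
Your proof is correct and follows essentially the same route as the paper: the positive part reduces via Lemma \ref{ConvergenceLemma} to a Fourier-compactly-supported piece approximated in (weighted) $L^2$, and the negative part exhibits a concrete $u$ whose dyadic blocks $a_L(u)$ stay bounded below while those of any Schwartz function (and hence of anything in the closure of $\mathcal S$) tend to zero. The only cosmetic differences are that you mollify instead of invoking density of $\mathcal S$ in $L^2(\angles{\xi}^{2s}\,d\tau\,d\xi)$, and your counterexample is built directly in the variable $\tau+h(\xi)$ rather than the paper's $\angles{\tau}^{-b-1/2}\chi_{S_1}(\abs{\xi})$.
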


\begin{proof}
If $1 \le p < \infty$ and $u \in X^{s,b;p}_{h(\xi)}$, then by Lemma \ref{ConvergenceLemma}, $v = \sum_{L \le L_0} u_L$ can be made arbitrarily close to $u$ in $X^{s,b;p}_{h(\xi)}$ by choosing $L_0$ large enough. But the index set of $L$ now being finite, $\widehat v$ belongs to $L^2(\angles{\xi}^{2s} \, d\tau \, d\xi)$, in which $\mathcal S$ is dense. Moreover, approximating $\widehat v$ from $\mathcal S$ in $L^2(\angles{\xi}^{2s} \, d\tau \, d\xi)$, one approximates also in $X^{s,b;p}_{h(\xi)}$.

If $p=\infty$, set $\widehat u(\tau,\xi) = \angles{\tau}^{-b-1/2} \chi_{S_1}(\abs{\xi})$. Then $L^b \norm{ \angles{\xi}^s \widehat{u_L}}_{L^2} \sim 1$ for large $L$, so $u \in X^{s,b;\infty}_{h(\xi)}$. Moreover, for any $v \in \mathcal S$ we have $L^b \norm{ \angles{\xi}^s (\widehat{u_L}-\widehat{v_L})}_{L^2} \gtrsim 1$ for large $L$, so approximation from $\mathcal S$ is impossible in $X^{s,b;\infty}_{h(\xi)}$.
\end{proof}

A duality pairing between $X^{s,b;p}_{h(\xi)}$ and $X^{-s,-b;p'}_{h(\xi)}$ can be defined in a natural way as an extension of the pairing sending $(u,v) \in \mathcal S' \times \mathcal S$ to
\begin{equation}\label{DualityPairing}
  \int u\overline v \, dt \, dx
  =
  (2\pi)^{-3} \int \widehat{u}(\tau,\xi) \overline{\widehat{v}(\tau,\xi)} \, d\tau \, d\xi,
\end{equation}
where equality holds by Plancherel's theorem. But the right side is well defined as an absolutely convergent integral for any $(u,v) \in X^{s,b;p}_{h(\xi)} \times X^{-s,-b;p'}_{h(\xi)}$, since by Cauchy-Schwarz and H\"older we can bound in absolute value by
\begin{equation}\label{DualityBound}
  \sum_L \norm{\angles{\xi}^s L^b \widehat{u_L}(\tau,\xi)}_{L^2} \norm{\angles{\xi}^{-s} L^{-b} \widehat{v_L}(\tau,\xi)}_{L^2}
  \le
  \norm{u}_{X^{s,b;p}_{h(\xi)}} \norm{v}_{X^{-s,-b;p'}_{h(\xi)}}.
\end{equation}
For $(u,v) \in X^{s,b;p}_{h(\xi)} \times X^{-s,-b;p'}_{h(\xi)}$ we can therefore consistently define $\int u\overline v \, dt \, dx$ by \eqref{DualityPairing}. This bilinear pairing is bounded, and hence continuous, in view of \eqref{DualityBound}. With this definition, we have the following.

\begin{lemma}\label{DualityLemma} Let $1 \le p \le \infty$. For any $u \in X^{s,b;p}_{h(\xi)}$ we have
\begin{equation}\label{duality}
  \norm{u}_{X^{s,b;p}_{h(\xi)}}
  =
  \sup \left\{ (2\pi)^{3} \Abs{\int u\overline v \, dt \, dx} \colon v \in X^{-s,-b;p'}_{h(\xi)}, \; \norm{v}_{X^{-s,-b;p'}_{h(\xi)}} = 1 \right\},
\end{equation}
where $1 \le p' \le \infty$ is the H\"older conjugate of $p$, defined by $\frac{1}{p}+\frac{1}{p'}=1$. Moreover, the set over which the supremum is taken can be further restricted as follows:
\begin{enumerate}
\item\label{duality1} if $p > 1$, we can restrict to $v \in \mathcal S$;
\item\label{duality2} if $p < \infty$, we can restrict to $v$ such that $\widehat v \in L^2$ with compact support.
\end{enumerate}
\end{lemma}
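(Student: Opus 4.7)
The plan is to prove the main identity \eqref{duality} first and then the two restrictions. The idea is to reduce \eqref{duality} to the standard duality between $l^p$ and $l^{p'}$ combined with duality between weighted $L^2$-spaces, using the isometric embedding already employed in the Banach-space lemma: the map $\Phi \colon u \mapsto (g_L)_L$ with $g_L(\tau,\xi) = L^b \widehat{u_L}(\tau,\xi)$ identifies $X^{s,b;p}_{h(\xi)}$ isometrically with a closed subspace of $l^p(2^{\N_0};Y)$, where $Y = L^2(\angles{\xi}^{2s}\,d\tau\,d\xi)$, and analogously $v \mapsto (h_L)_L$ with $h_L = L^{-b}\widehat{v_L}$ embeds $X^{-s,-b;p'}_{h(\xi)}$ into $l^{p'}(2^{\N_0};Y')$, where $Y' = L^2(\angles{\xi}^{-2s}\,d\tau\,d\xi)$. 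Since $\widehat{u_L}$ and $\widehat{v_{L'}}$ have pairwise disjoint supports for $L \neq L'$, Plancherel rewrites the defined pairing as
\[
  (2\pi)^3 \int u\overline{v}\, dt\, dx = \sum_L \int g_L \overline{h_L}\, d\tau\, d\xi,
\]
and \eqref{DualityBound} is precisely the Cauchy--Schwarz-then-H\"older upper bound, giving the $\le$ direction of \eqref{duality}.

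For the reverse inequality I would construct a near-extremizer explicitly. At each $L$ with $\norm{g_L}_Y > 0$, set $h_L = c_L g_L \angles{\xi}^{2s}/\norm{g_L}_Y$, which saturates Cauchy--Schwarz in the $L$-fiber and yields $\int g_L \overline{h_L}\, d\tau\, d\xi = c_L \norm{g_L}_Y$ and $\norm{h_L}_{Y'} = c_L$. The nonnegative weights $c_L$ are then chosen by the usual $l^p$--$l^{p'}$ duality recipe: $c_L = \norm{g_L}_Y^{p-1}/\norm{u}_{X^{s,b;p}_{h(\xi)}}^{p/p'}$ when $1 < p < \infty$; $c_L = 1$ for $L \le L_0$ and $0$ otherwise when $p = 1$ (letting $L_0 \to \infty$); and $c_L$ concentrated on a single dyadic scale $L_0$ with $\norm{g_{L_0}}_Y$ approaching $\sup_L \norm{g_L}_Y$ when $p = \infty$. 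These choices give $\norm{h}_{l^{p'}(Y')} = 1$ and $\sum_L \int g_L \overline{h_L}\, d\tau\, d\xi$ equal, or arbitrarily close, to $\norm{u}_{X^{s,b;p}_{h(\xi)}}$. Since $g_L$ is automatically supported in $A_L$, so is $h_L$, and $\widehat{v} := \sum_L L^b h_L$ then defines a legitimate element of $X^{-s,-b;p'}_{h(\xi)}$ of unit norm.

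For (i), where $p > 1$ and so $p' < \infty$, Lemma \ref{DensityLemma} says $\mathcal S$ is dense in $X^{-s,-b;p'}_{h(\xi)}$; combined with the continuity of the pairing (a direct consequence of \eqref{DualityBound}) this lets me replace the extremizer $v$ by a Schwartz approximant renormalized to the unit sphere without changing the supremum. For (ii), where $p < \infty$, Lemma \ref{ConvergenceLemma} gives $u_{\mathrm{trunc}} := \sum_{L \le L_0,\,N \le N_0} u_{N,L} \to u$ in $X^{s,b;p}_{h(\xi)}$; applying the extremizer construction above to $u_{\mathrm{trunc}}$ in place of $u$ produces a $v$ whose Fourier support is contained in the bounded set $\bigcup_{L \le L_0,\,N \le N_0} (\{\abs{\xi} \in S_N\} \cap A_L)$, so $\widehat v \in L^2$ with compact support. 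Since distinct dyadic pieces $\widehat{u_{N,L}}$ have pairwise disjoint supports, $\int u\overline v\, dt\, dx = \int u_{\mathrm{trunc}} \overline v\, dt\, dx$ automatically, and passing $L_0, N_0 \to \infty$ recovers the full norm on the right.

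The main technical nuisance is the bookkeeping at the endpoints $p \in \{1,\infty\}$, where the $l^p$--$l^{p'}$ extremizer is not attained and the optimizer must be approached through truncation in $L$ or concentration on a dominant dyadic scale; otherwise the proof is a routine assembly of Plancherel, Cauchy--Schwarz, and H\"older.
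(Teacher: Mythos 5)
Your proposal is correct and follows essentially the same route as the paper: the upper bound is \eqref{DualityBound}, the lower bound comes from the explicit extremizer $\widehat{v_L} \propto \angles{\xi}^{2s}L^{2b}\widehat{u_L}$ (your $c_L$ weights reproduce exactly the paper's formula \eqref{DualFunction} for $1\le p<\infty$, and the single-scale concentration for $p=\infty$ matches the paper's argument), and claims (i) and (ii) are handled by Lemma \ref{DensityLemma} and Lemma \ref{ConvergenceLemma} exactly as in the paper. The only cosmetic differences are your packaging via the $l^p(Y)$--$l^{p'}(Y')$ duality and your truncation in $L$ at $p=1$, where the paper's formula attains the supremum without truncation.
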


\begin{proof}
By \eqref{DualityPairing} and \eqref{DualityBound}, LHS\eqref{duality} $\ge$ RHS\eqref{duality}. Conversely, if $1 \le p < \infty$, then defining $v$ by
\begin{equation}\label{DualFunction}
  \widehat{v_L}(\tau,\xi) = \frac{\angles{\xi}^{2s} L^{2b} \widehat{u_L}(\tau,\xi)}{\norm{\angles{\cdot}^{s} L^{b} \widehat{u_L}}_{L^2}^{2-p} \norm{u}_{X^{s,b;p}_{h(\xi)}}^{p/p'}}
\end{equation}
for all $L$ for which $\norm{\angles{\cdot}^{s} L^{b} \widehat{u_L}}_{L^2} > 0$, and $\widehat{v_L}(\tau,\xi) = 0$ for all other $L$, we have $\norm{v}_{X^{-s,-b;p'}_{h(\xi)}} = 1$ (we assume of course that LHS\eqref{duality} is not zero) and equality holds in \eqref{duality}. If $p=\infty$, then fixing $L$ and defining $v$ by
\[
  \widehat v(\tau,\xi) = \frac{\angles{\xi}^{2s} L^{2b} \widehat{u_L}(\tau,\xi)}{\norm{\angles{\cdot}^{s} L^{b} \widehat{u_L}}_{L^2}}
\]
we have $\norm{v}_{X^{-s,-b;1}_{h(\xi)}} = 1$ and $(2\pi)^{3} \int u\overline v \, dt \, dx = \norm{\angles{\cdot}^{s} L^{b} \widehat{u_L}}_{L^2}$. It follows that RHS\eqref{duality} $\ge \norm{\angles{\cdot}^{s} L^{b} \widehat{u_L}}_{L^2}$ for all $L$, hence RHS\eqref{duality} $\ge \norm{u}_{X^{s,b;\infty}_{h(\xi)}}$. This concludes the proof of \eqref{duality}. The claim \eqref{duality1} follows since $\mathcal S$ is dense in $X^{-s,-b;p'}_{h(\xi)}$ for $p' < \infty$. Finally, to prove \eqref{duality2} we assume $p < \infty$ and note that by Lemma \ref{ConvergenceLemma} we can reduce to the case where $\widehat u$ has compact support, hence $\widehat v$ given by \eqref{DualFunction} also has compact support. Moreover, $\widehat v \in L^2$.
\end{proof}

The restriction of $X^{\sigma,s,b;p}_{h(\xi)}$ to a time interval $(-\delta,\delta)$ is denoted $X^{\sigma,s,b;p}_{h(\xi)}(\delta)$. It can be viewed as the quotient space $X^{\sigma,s,b;p}_{h(\xi)} / \mathcal M$, where $\mathcal M$ is the closed subspace consisting of those $u \in X^{\sigma,s,b;p}_{h(\xi)}$ which vanish on $(-\delta,\delta) \times \R^2$. The norm
\begin{equation}\label{RestrictionNorm}
  \norm{u}_{X^{\sigma,s,b;p}_{h(\xi)}(\delta)}
  =
  \inf \left\{\norm{v}_{X^{\sigma,s,b;p}_{h(\xi)}} \colon v \in X^{\sigma,s,b;p}_{h(\xi)}, \; \text{$u=v$ on $(-\delta,\delta) \times \R^2$} \right\}
\end{equation}
makes $X^{\sigma,s,b;p}_{h(\xi)}(\delta)$ a Banach space. As before, we write $X^{0,s,b;p}_{h(\xi)}(\delta) = X^{s,b;p}_{h(\xi)}(\delta)$.

\begin{lemma}\label{CutoffLemma}
Let $\sigma \ge 0$, $s \in \R$, $0 < b \le 1/2$ and $0 < \delta \le 1$. Then
\[
  \norm{u}_{X^{\sigma,s,b;1}_{h(\xi)}(\delta)}
  \le c \delta^{1/2-b} \norm{u}_{X^{\sigma,s,1/2;1}_{h(\xi)}(\delta)},
\]
where $c$ depends only on $b$.
\end{lemma}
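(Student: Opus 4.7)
The plan is to reduce the statement to a global cut-off estimate and then prove the latter by a dyadic analysis of convolution in the modulation variable.

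Fix a cut-off $\chi \in C_c^\infty(\R)$ with $\chi \equiv 1$ on $[-1,1]$, and set $\chi_\delta(t) = \chi(t/\delta)$. Given $u \in X^{\sigma,s,1/2;1}_{h(\xi)}(\delta)$, choose an extension $v \in X^{\sigma,s,1/2;1}_{h(\xi)}$ with $\norm{v}_{X^{\sigma,s,1/2;1}_{h(\xi)}} \le 2\norm{u}_{X^{\sigma,s,1/2;1}_{h(\xi)}(\delta)}$. Since $\chi_\delta v = v$ on $(-\delta,\delta)$, the product $\chi_\delta v$ is also an admissible extension of $u$, and by \eqref{RestrictionNorm} the lemma will follow once we establish the cut-off bound
\[
  \norm{\chi_\delta v}_{X^{\sigma,s,b;1}_{h(\xi)}} \le c\,\delta^{1/2-b}\,\norm{v}_{X^{\sigma,s,1/2;1}_{h(\xi)}}.
\]

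To prove this, decompose $v = \sum_{L'} Q_{L'} v$ using Lemma \ref{ConvergenceLemma}. By the triangle inequality it suffices to show, for each dyadic $L'$,
\[
  \norm{\chi_\delta (Q_{L'}v)}_{X^{\sigma,s,b;1}_{h(\xi)}} \le c\,\delta^{1/2-b}\,(L')^{1/2}\,\bignorm{e^{\sigma\norm{\xi}}\angles{\xi}^s\widehat{Q_{L'}v}}_{L^2_{\tau,\xi}},
\]
since summation in $L'$ then reconstructs the $X^{\sigma,s,1/2;1}_{h(\xi)}$ norm of $v$ on the right. Multiplication by $\chi_\delta$ in $t$ corresponds to convolution in $\tau$ with a Schwartz kernel $K_\delta$ satisfying $\norm{K_\delta}_{L^1} \lesssim 1$, $\norm{K_\delta}_{L^\infty} \lesssim \delta$, and $|K_\delta(\tau)| \lesssim_N \delta(1+\delta|\tau|)^{-N}$ for every $N \ge 0$. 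Changing variable $\mu = \tau + h(\xi)$ for fixed $\xi$ and setting $F(\mu,\xi) := e^{\sigma\norm{\xi}}\angles{\xi}^s \widehat{Q_{L'}v}(\mu - h(\xi),\xi)$, which is supported in $\mu \in S_{L'}$, the weighted Fourier transform of $\chi_\delta Q_{L'} v$ becomes $(K_\delta *_\mu F)(\mu,\xi)$, and the target inequality reduces to
\[
  \sum_L L^b \bignorm{\chi_{S_L}(\mu)(K_\delta *_\mu F)(\mu,\xi)}_{L^2_{\mu,\xi}} \le c\,\delta^{1/2-b}\,(L')^{1/2}\,\norm{F}_{L^2_{\mu,\xi}}.
\]

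The core of the argument is to derive three complementary bounds on the operator norm $A_L$ defined by $\norm{\chi_{S_L}(\mu)(K_\delta *_\mu F)}_{L^2_{\mu,\xi}} \le A_L\norm{F}_{L^2_{\mu,\xi}}$: (a) $A_L \lesssim 1$ via Young's inequality and $\norm{K_\delta}_{L^1} \lesssim 1$; (b) $A_L \lesssim L^{1/2}\delta(L')^{1/2}$, exploiting $\norm{K_\delta}_{L^\infty} \lesssim \delta$ together with $\norm{F(\cdot,\xi)}_{L^1_\mu} \le (L')^{1/2}\norm{F(\cdot,\xi)}_{L^2_\mu}$ by Cauchy--Schwarz on $\supp F$; (c) for $L \gg L'$, the decay $|K_\delta(\mu-\mu')| \lesssim_N \delta(\delta L)^{-N}$ on $\mu \in S_L$, $\mu' \in S_{L'}$ gives $A_L \lesssim_N L^{1/2}\delta(\delta L)^{-N}(L')^{1/2}$. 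One then splits the dyadic sum at the thresholds $L \sim \delta^{-1}$ and $L \sim L'$: on $L \le \delta^{-1}$ use (b), yielding $\sum L^{b+1/2}\delta(L')^{1/2} \sim \delta^{1/2-b}(L')^{1/2}$; on the moderate range $\delta^{-1} < L \lesssim L'$ (empty when $L' \le \delta^{-1}$) use (a), giving a geometric sum $\sim (L')^b \le \delta^{1/2-b}(L')^{1/2}$, where the last inequality uses $L'\delta \ge 1$ together with $b - 1/2 \le 0$; on the tail $L \gg L' + \delta^{-1}$ use (c) with any $N > b + 1/2$, which is plainly summable.

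The main obstacle is this case analysis: each of the three bounds on $A_L$ is sharp only in its own regime, and one must verify that they combine without logarithmic or boundary losses. The hypothesis $b \le 1/2$ is essential for closing the moderate-range estimate (and makes the statement trivial at $b=1/2$), while $b > 0$ ensures the geometric sums over dyadic $L$ are controlled by their largest terms rather than being logarithmic in $L'\delta$. The resulting constant depends only on $b$ through the choice of $\chi$ and the value of $N$ in (c), as required.
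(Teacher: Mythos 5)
Your proposal is correct, but it takes a different route from the paper: the paper's entire proof is a one-line reduction to the case $\sigma=s=0$ (via the substitution $u \to e^{\sigma\norm{D}}\angles{D}^s u$, which commutes with everything in sight) followed by a citation of \cite[Proposition 2.1(iii)]{GP2010}, whereas you give a self-contained argument. Your proof is the standard mechanism behind such ``gain of $\delta$'' estimates in Bourgain-type spaces, adapted to the $\ell^1$-in-modulation norm: multiply a near-optimal extension by a rescaled cutoff, observe that this is convolution in the modulation variable $\mu=\tau+h(\xi)$ with a kernel $K_\delta$ satisfying $\norm{K_\delta}_{L^1}\lesssim 1$, $\norm{K_\delta}_{L^\infty}\lesssim\delta$ and rapid decay at scale $\delta^{-1}$, and then interpolate dyadically between the three resulting operator bounds. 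I checked the three regimes: the low-modulation sum $L\le\delta^{-1}$ gives exactly $\delta^{1/2-b}(L')^{1/2}$ since $b+1/2>0$; the moderate range closes because $(L'\delta)^{b-1/2}\le 1$ when $L'\delta\ge 1$ and $b\le 1/2$; and the tail with $N>b+1/2$ is dominated by its first term $\max(L',\delta^{-1})^{b+1/2-N}$, which in both subcases is bounded by $\delta^{1/2-b}(L')^{1/2}$ (a computation you label ``plainly summable'' but do not write out --- it does work). Your handling of $\sigma$ and $s$ is also fine, since the weight $e^{\sigma\norm{\xi}}\angles{\xi}^s$ simply rides along inside $F$ and the convolution acts only in $\tau$. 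The only point worth tightening is the interchange of the cutoff multiplication with the sum $v=\sum_{L'}Q_{L'}v$: having bounded each piece, you should note that the series $\sum_{L'}\chi_\delta Q_{L'}v$ converges in $X^{\sigma,s,b;1}_{h(\xi)}$ and identify its limit with $\chi_\delta v$ (e.g.\ via convergence in $\mathcal S'$), which is routine. What your approach buys is independence from the external reference and an explicit view of where the hypotheses $0<b\le 1/2$ enter; what the paper's approach buys is brevity, at the cost of deferring the real work to \cite{GP2010}.
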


\begin{proof}
Replacing $u$ by $e^{\sigma\norm{D}}\angles{D}^su$ we reduce to the case $\sigma=s=0$, which is proved in \cite[Proposition 2.1(iii)]{GP2010}.
\end{proof}

\begin{lemma}\label{SharpTimeRestrictionLemma}
Let $s \in \R$, $-1/2 < b < 1/2$, $1 \le p < \infty$ and $0 < \delta \le 1$. Then for any time interval $I \subset [-\delta,\delta]$ we have the estimate
\begin{equation}\label{Restricted}
  \norm{\chi_{I} u}_{X^{s,b;p}_{h(\xi)}} \le c \norm{u}_{X^{s,b;p}_{h(\xi)}(\delta)},
\end{equation}
where $\chi_I(t)$ is the characteristic function of $I$, and $c$ depends only on $b$.
\end{lemma}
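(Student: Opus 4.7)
The plan is to deduce the lemma from a global multiplier bound on the full space $X^{s,b;p}_{h(\xi)}$: for every $W\in X^{s,b;p}_{h(\xi)}$ and every bounded interval $J\subset\R$,
\begin{equation*}
  \norm{\chi_J W}_{X^{s,b;p}_{h(\xi)}} \le c(b)\norm{W}_{X^{s,b;p}_{h(\xi)}},
\end{equation*}
with $c(b)$ independent of $J$ and $W$. Given this, the lemma follows at once from the definition \eqref{RestrictionNorm}: pick any extension $U\in X^{s,b;p}_{h(\xi)}$ with $U=u$ on $(-\delta,\delta)\times\R^2$; since $I\subset[-\delta,\delta]$ we have $\chi_I u=\chi_I U$; apply the global bound to $U$ and take the infimum over $U$.

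For the global bound, first reduce to $s=0$ by applying $\angles{D}^s$, which commutes with $\chi_J$. Then write $W=e^{-ith(D)}V$; since $\chi_J(t)$ is a scalar in $t$ while $e^{-ith(D)}$ acts in $x$, the two operators commute, so $\chi_J W=e^{-ith(D)}(\chi_J V)$, and a change of variables on the Fourier side gives the isometry
\begin{equation*}
  \norm{F}_{X^{0,b;p}_{h(\xi)}} = \norm{e^{ith(D)}F}_{X^{0,b;p}_0}
\end{equation*}
for any $F$. Since the norm $\norm{\cdot}_{X^{0,b;p}_0}$ uses dyadic sharp cutoffs in $\tau$ alone, Plancherel in $x$ together with Minkowski's inequality (distinguishing $p\ge 2$ from $1\le p\le 2$) reduces matters to the purely one-dimensional claim: for every interval $J\subset\R$ and every $f\in\mathcal S'(\R)$,
\begin{equation*}
  \biggl(\sum_L L^{bp}\bignorm{\chi_{S_L}(\tau)\mathcal F(\chi_J f)(\tau)}_{L^2_\tau}^p\biggr)^{1/p}
  \le c(b)\biggl(\sum_L L^{bp}\bignorm{\chi_{S_L}(\tau)\widehat f(\tau)}_{L^2_\tau}^p\biggr)^{1/p}.
\end{equation*}

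This one-dimensional estimate is the classical assertion that the characteristic function of an interval is a pointwise multiplier on the Besov space $B^b_{2,p}(\R)$ for $-1/2<b<1/2$, uniformly in the interval. The sharp-Littlewood-Paley norm displayed above is equivalent to the standard smooth-Littlewood-Paley Besov norm, and on the latter the multiplier property of $\chi_J$ is standard: for $p=2$ it reduces to the pointwise multiplier property of $H^b(\R)$ for $|b|<1/2$, and for general $p$ it follows by real interpolation between two exponents inside $(-1/2,1/2)$. The main technical obstacle is precisely this one-dimensional bound for $p\ne 2$: one must control the dyadic convolution matrix produced by the decay $|\widehat{\chi_J}(\tau)|\lesssim\min(|J|,|\tau|^{-1})$ when it acts on dyadically modulated $\widehat f$, and the hypothesis $|b|<1/2$ is exactly what makes this matrix bounded on the weighted $l^p$ sequence space with constant depending only on $b$ and not on $J$.
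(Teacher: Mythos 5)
Your overall strategy is sound and close in substance to the paper's: reduce to the global bound $\norm{\chi_J W}_{X^{s,b;p}_{h(\xi)}} \le c\norm{W}_{X^{s,b;p}_{h(\xi)}}$, exploit the kernel decay $\abs{\widehat{\chi_J}(\tau)} \lesssim \min(\abs{J},\abs{\tau}^{-1})$, and sum the resulting dyadic off-diagonal interactions using $\abs{b}<1/2$. The conjugation $W = e^{-ith(D)}V$ to reduce to $h\equiv 0$ is a clean way to package what the paper does implicitly via $\abs{\tau-\lambda} = \abs{(\tau+h(\xi))-(\lambda+h(\xi))}$. (The paper instead argues by duality: it writes $\chi_I$ in terms of signum functions, reduces via Lemma \ref{DualityLemma} to bounding $\int \sgn(t)\,u\overline v\,dt\,dx$, and sums the cases $L_1\sim L_2$, $L_1\ll L_2$, $L_2\ll L_1$ with the gain $(L_1/L_2)^{1/2-b}$; density of $\mathcal S$ for $p<\infty$ is used to make the multiplication legitimate.)

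There is, however, one genuine gap: the claimed reduction to a \emph{scalar} one-dimensional inequality "by Plancherel in $x$ together with Minkowski's inequality" does not work for $p\ne 2$. The target norm is of the form $\bignorm{(L^b g_L)}_{l^p_L(L^2_{\tau,\xi})}$, whereas applying the one-dimensional claim pointwise in $\xi$ and integrating controls only $\bignorm{\cdot}_{L^2_\xi(l^p_L(L^2_\tau))}$. To close the chain you would need $\norm{\cdot}_{l^p_L(L^2_\xi)}\le\norm{\cdot}_{L^2_\xi(l^p_L)}$ on the left \emph{and} the reverse inequality on the right; Minkowski gives these in opposite ranges of $p$, so the argument closes only at $p=2$. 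Since the lemma is invoked in the paper with $p=1$ (the spaces $X^{\sigma,s,b;1}$), this is precisely the case you cannot reach this way. The repair is straightforward and is in fact what your final paragraph gestures at: because multiplication by $\chi_J$ acts diagonally in $\xi$, the operator norm of $\chi_{S_L}(\tau)\,T\,\chi_{S_{L'}}(\tau)$ on $L^2_{\tau,\xi}$ obeys the same bound $A_{L,L'}\lesssim \min(L,L')^{1/2}\max(L,L')^{-1/2}$ as in the scalar case, and the weighted matrix $L^b A_{L,L'} (L')^{-b}$ is then bounded on $l^p$ of $L^2_{\tau,\xi}$-blocks by Schur/Young, uniformly in $J$, exactly when $\abs{b}<1/2$. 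So run the dyadic matrix argument directly at the vector-valued level (or, alternatively, interpolate between two $p=2$ exponents $b_0\ne b_1$ in $(-1/2,1/2)$ \emph{before} discarding the $\xi$-variable), and drop the intermediate scalar reduction.
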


\begin{proof} In view of the definition \eqref{RestrictionNorm} of the restriction norm, it suffices to prove
\[
  \norm{\chi_{I} u}_{X^{s,b;p}_{h(\xi)}} \le c \norm{u}_{X^{s,b;p}_{h(\xi)}}.
\]
We adapt an argument from \cite[Lemma 3.2]{CKSTT2004}. Since $p < \infty$, $\mathcal S$ is dense in $X^{s,b;p}_{h(\xi)}$ by Lemma \ref{DensityLemma}, so it is enough to prove the estimate for $u \in \mathcal S$. Replacing $u$ by $\angles{D}^s u$, we may assume $s=0$. Writing $\chi_I(t)$ in terms of signum functions and applying Lemma \ref{DualityLemma}, we then reduce to proving
\begin{equation}\label{SharpTimeRestrictionLemma:A}
  \Abs{ \int \sgn(t) u(t,x) \overline{v(t,x)} \, dt \, dx } \le c \norm{u}_{X^{0,b;p}_{h(\xi)}} \norm{v}_{X^{0,-b;p'}_{h(\xi)}}
\end{equation}
for $u \in \mathcal S$ and $v \in X^{0,-b;p'}_{h(\xi)} \cap L^2$. We bound the left side by
\[
  \sum_{L_1,L_2} \Abs{ \int \sgn(t) u_{L_1}(t,x) \overline{v_{L_2}(t,x)} \, dt \, dx }
\]
and separate the cases $L_1 \sim L_2$, $L_1 \ll L_2$ and $L_2 \ll L_1$. For $L_1 \sim L_2$ we bound by
\[
  \sum_{L_1 \sim L_2} \norm{u_{L_1}}_{L^2} \norm{v_{L_2}}_{L^2}
  \sim
  \sum_{L_1 \sim L_2} L_1^b \norm{u_{L_1}}_{L^2} L_2^{-b} \norm{v_{L_2}}_{L^2}
  \lesssim
  \norm{u}_{X^{0,b;p}_{h(\xi)}} \norm{v}_{X^{0,-b;p'}_{h(\xi)}},
\]
while for $L_1 \ll L_2$ we write
\begin{multline*}
  \int \sgn(t) u_{L_1}(t,x) \overline{v_{L_2}(t,x)} \, dt \, dx
  =
  \lim_{n \to \infty} \int \phi\left(\frac{t}{n}\right) u_{L_1}(t,x) \overline{v_{L_2}(t,x)} \, dt \, dx
  \\
  =
  c
  \lim_{n \to \infty}
  \int
  n\widehat \phi(n[\tau-\lambda])
  \widehat{u_{L_1}}(\lambda,\xi)
  \overline{\widehat{v_{L_2}}(\tau,\xi)} \, d\lambda \, d\tau \, d\xi,
\end{multline*}
where $\phi(t) = \sgn(t) \chi_{[-1,1]}(t)$ has Fourier transform $\widehat \phi(\tau) = O(\abs{\tau}^{-1})$ and
\[
  \abs{\tau-\lambda} = \abs{(\tau+h(\xi)) - (\lambda+h(\xi))} \sim L_2,
\]
hence we dominate in this case by
\begin{align*}
  &\sum_{L_1 \ll L_2}
  L_2^{-1} \int \abs{\widehat{u_{L_1}}(\lambda,\xi)} \abs{\widehat{v_{L_2}}(\tau,\xi)} \, d\lambda \, d\tau \, d\xi
  \\
  &\qquad\le c
  \sum_{L_1 \ll L_2}
  \left(\frac{L_1}{L_2}\right)^{1/2} \int \norm{\widehat{u_{L_1}}(\lambda,\xi)}_{L^2_\lambda} \norm{\widehat{v_{L_2}}(\tau,\xi)}_{L^2_\tau} \, d\xi
  \\
  &\qquad\le c
  \sum_{L_1 \ll L_2}
  \left(\frac{L_1}{L_2}\right)^{1/2-b} L_1^b \norm{\widehat{u_{L_1}}}_{L^2} L_2^{-b}\norm{\widehat{v_{L_2}}}_{L^2}
  \\
  &\qquad\le c \sum_{l=0}^\infty \sum_{j=0}^\infty 2^{-l\varepsilon} \alpha_j \beta_{j+l}
  \le c \norm{\alpha}_{l^p} \norm{\beta}_{l^{p'}}
  = c \norm{u}_{X^{0,b;p}_{h(\xi)}} \norm{v}_{X^{0,-b;p'}_{h(\xi)}},
\end{align*}
where $\varepsilon=1/2-b>0$, $L_1=2^j$, $L_2=2^{j+l}$, $\alpha_j = L_1^b\norm{\widehat{u_{L_1}}}_{L^2}$ and $\beta_{j+l} = L_2^{-b}\norm{\widehat{v_{L_2}}}_{L^2}$. Here we used $b < 1/2$. The remaining case $L_2 \ll L_1$ works out similarly, but relies on $-b < 1/2$.
\end{proof}

In terms of the free propagator $U(t) = e^{-ith(D)}$ the solution of
\begin{equation}\label{IVP}
  \left(-i\partial_t + h(D)\right) u = F,
  \qquad
  u(0,x) = f(x),
\end{equation}
is given, for sufficiently regular $F(t,x)$ and $f(x)$, by Duhamel's formula
\begin{equation}\label{Duhamel}
  u(t) = U(t) f + i\int_0^t U(t-t')F(t') \, dt',
\end{equation}
and satisfies the following estimate.

\begin{lemma}\label{IVPlemma} Let $\sigma \ge 0$, $s \in \R$, $-1/2 < b < 1/2$ and $0 < \delta \le 1$. For any $f \in G^{\sigma,s}$ and $F \in X^{\sigma,s,b;\infty}_{h(\xi)}(\delta)$ there is a unique $u \in X^{\sigma,s,1/2;1}_{h(\xi)}(\delta)$ satisfying the initial value problem \eqref{IVP} on $(-\delta,\delta) \times \R^2$. Moreover,
\begin{equation}\label{CutoffEstimate}
  \norm{u}_{X^{\sigma,s,1/2;1}_{h(\xi)}(\delta)}
  \le c \left( \norm{f}_{G^{\sigma,s}} + \delta^{1/2+b} \norm{F}_{X^{\sigma,s,b;\infty}_{h(\xi)}(\delta)} \right),
\end{equation}
where $c$ depends only on $b$.
\end{lemma}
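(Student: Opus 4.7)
The plan is to follow the standard recipe for Bourgain space $\ell^1$-type estimates. Since the Fourier multiplier $e^{\sigma\norm{D}}\angles{D}^s$ is an isometric isomorphism from $G^{\sigma,s}$ onto $L^2$ and from $X^{\sigma,s,b;p}_{h(\xi)}$ onto $X^{0,0,b;p}_{h(\xi)}$, and commutes with $-i\partial_t+h(D)$, it suffices to treat the case $\sigma=s=0$. Next I extend $F$ to some $\tilde F \in X^{0,b;\infty}_{h(\xi)}$ on all of $\R^{1+2}$ with $\norm{\tilde F}_{X^{0,b;\infty}_{h(\xi)}} \le 2 \norm{F}_{X^{0,b;\infty}_{h(\xi)}(\delta)}$, fix a bump function $\chi \in C_c^\infty(\R)$ with $\chi \equiv 1$ on $[-1,1]$ and $\supp\chi \subset [-2,2]$, and define the candidate
\begin{equation*}
u(t) = \chi(t/\delta)\, U(t) f + i\,\chi(t/\delta) \int_0^t U(t-t') \,\chi(t'/\delta)\tilde F(t')\, dt',
\end{equation*}
which agrees with the Duhamel formula \eqref{Duhamel} on $(-\delta,\delta)$. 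Thus it is enough to bound the full-space norm $\norm{u}_{X^{0,1/2;1}_{h(\xi)}}$ by the right-hand side of \eqref{CutoffEstimate}.

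This reduces the task to two standard estimates. The free-solution bound
\begin{equation*}
\norm{\chi(t/\delta) U(t) f}_{X^{0,1/2;1}_{h(\xi)}} \le c\norm{f}_{L^2}
\end{equation*}
is a direct Fourier-side computation: the space-time Fourier transform equals $\delta\, \widehat\chi\bigl(\delta(\tau+h(\xi))\bigr)\, \widehat f(\xi)$, so in the modulation shell $\{\tau+h(\xi)\in S_L\}$ the $L^2$-mass is of order $\delta^{1/2}\min\bigl(L^{1/2},(\delta L)^{-N}\bigr)\norm{f}_{L^2}$ for any $N$, after the change of variables $s=\delta(\tau+h(\xi))$ exploits the Schwartz decay of $\widehat\chi$. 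Weighting by $L^{1/2}$ and summing in $L$ then yields a bound uniform in $\delta\le 1$.

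The Duhamel bound
\begin{equation*}
\norm{\chi(t/\delta) \int_0^t U(t-t')\chi(t'/\delta)\tilde F(t')\, dt'}_{X^{0,1/2;1}_{h(\xi)}} \le c\delta^{1/2+b}\norm{\tilde F}_{X^{0,b;\infty}_{h(\xi)}}
\end{equation*}
is the technical heart of the argument, and the step I expect to be hardest. Setting $G(t')=\chi(t'/\delta)\tilde F(t')$ and decomposing $G=\sum_{L_0} Q_{L_0} G$ dyadically in modulation, each piece's time integral, after conjugating with the free propagator and inverting in $\tau$, becomes a principal-value type expression proportional to $\int \frac{e^{it\tau}-e^{-ith(\xi)}}{\tau+h(\xi)} \widehat{Q_{L_0}G}(\tau,\xi)\,d\tau$ (times $e^{-ith(\xi)}$). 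I would then split the $\tau$-integral into the regions $\abs{\tau+h(\xi)}\lesssim\delta^{-1}$, where I expand the numerator as a Taylor series in $t(\tau+h(\xi))$ and absorb the resulting powers of $t$ into the cutoff $\chi(t/\delta)$, and $\abs{\tau+h(\xi)}\gtrsim\delta^{-1}$, where I use the trivial bound $\abs{e^{it\tau}-e^{-ith(\xi)}}\le 2$ together with $\abs{\supp\chi(t/\delta)}\le 4\delta$. The hypothesis $-1/2<b<1/2$ is exactly what makes the dyadic sums in $L$ and $L_0$ converge and reproduces the scaling factor $\delta^{1/2+b}$.

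Uniqueness is immediate from Lemma \ref{Emb}: any solution in $X^{\sigma,s,1/2;1}_{h(\xi)}(\delta)$ is continuous with values in $G^{\sigma,s}$, so the difference of two solutions is continuous, vanishes at $t=0$, and satisfies the homogeneous equation on $(-\delta,\delta)\times\R^2$; taking the spatial Fourier transform reduces this to an ODE in $t$ whose only solution is zero.
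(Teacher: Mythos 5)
Your strategy is at bottom the same as the paper's: reduce to $\sigma=s=0$ by conjugating with $e^{\sigma\norm{D}}\angles{D}^s$, pass to an extension of $F$, and estimate the free and Duhamel parts of \eqref{Duhamel} separately, splitting the inhomogeneity at modulation $\abs{\tau+h(\xi)}\sim\delta^{-1}$ and Taylor‑expanding in the low‑modulation region (the paper's terms $U(t)f_n$, $U(t)g$ and the division by $\tau+h(\xi)$ are exactly this splitting; the paper then defers the arithmetic to a reference). However, there is a gap you do not address and which the paper explicitly flags: for $b\le 0$ an element of $X^{0,b;\infty}_{h(\xi)}$ need not be locally integrable in $t$ with values in $L^2_x$ (e.g.\ $\widehat F(\tau,\xi)=\angles{\tau}^{-b-1/2}\chi_{S_1}(\abs{\xi})$, as in the proof of Lemma \ref{DensityLemma}), so your candidate $u$ and the subsequent Fourier‑side manipulations of the Duhamel integral are only legitimate for nice (say Schwartz) $F$. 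Since $\mathcal S$ is \emph{not} dense in $X^{s,b;\infty}_{h(\xi)}$ (Lemma \ref{DensityLemma}), you cannot just pass to the limit: you must check both that the operator defined by your formulas extends boundedly to all of $X^{0,b;\infty}_{h(\xi)}$ and — the nontrivial point — that the extension still solves \eqref{IVP}. The paper does the latter by embedding $X^{0,b;\infty}_{h(\xi)}\subset X^{0,b';2}_{h(\xi)}$ for some $b'\in(-1/2,b)$, where $\mathcal S$ is dense and the Duhamel operator is known to produce a solution. Your proposal needs this (or an equivalent) step.

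There is also an arithmetic slip in the free part: after the substitution $s=\delta(\tau+h(\xi))$ the $L^2_{\tau,\xi}$ mass of $\delta\,\widehat\chi(\delta(\tau+h(\xi)))\widehat f(\xi)$ in the shell $\{\tau+h(\xi)\in S_L\}$ is of order $\delta^{1/2}\min\bigl((\delta L)^{1/2},(\delta L)^{-N}\bigr)\norm{f}_{L^2}$, i.e.\ $\delta L^{1/2}\norm{f}_{L^2}$ in the regime $\delta L\le 1$, not $\delta^{1/2}L^{1/2}\norm{f}_{L^2}$ as you state. With your exponent the weighted sum $\sum_{L\lesssim\delta^{-1}}L^{1/2}\cdot\delta^{1/2}L^{1/2}\sim\delta^{-1/2}$ and the bound is not uniform in $\delta$; with the correct exponent both the low‑ and high‑modulation sums are $O(1)$ and the estimate closes. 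The uniqueness argument via Lemma \ref{Emb} and the spatial Fourier transform is fine.
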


\begin{proof}
By the substitution $u \to e^{\sigma\norm{D}}\angles{D}^s u$ we reduce to the case $\sigma=s=0$. The proof now follows more or less along the lines of the proof of the analogous result for the standard $X^{s,b} = X^{s,b;2}$ spaces, but some care must be taken since $\mathcal S$ is not dense in $X^{0,b;\infty}_{h(\xi)}$. Assuming for the moment $F \in \mathcal S$, then \eqref{Duhamel} can be rewritten, via the Fourier transform, as
\[
  u(t) = U(t) f
  + (TF)(t),
\]
where
\begin{align*}
  (TF)(t) &= \sum_{n=1}^\infty \frac{t^n}{n!} U(t) f_n
  + U(t) g
  + \mathcal F^{-1} \left( \frac{\mathcal F\{Q_{L>\delta^{-1}}F\}(\tau,\xi)}{\tau+h(\xi)} \right),
  \\
  \widehat{f_n}(\xi) &= c \int (\tau+h(\xi))^{n-1} \mathcal F\{Q_{L \le \delta^{-1}} F\}(\tau,\xi) \, d\tau,
  \\
  \widehat g(\xi) &= c \int (\tau+h(\xi))^{-1} \mathcal F\{Q_{L > \delta^{-1}} F\}(\tau,\xi) \, d\tau.
\end{align*}
Now one observes that $TF$ is well-defined for any $F \in X^{0,b;\infty}_{h(\xi)}$ and that \eqref{CutoffEstimate} holds; see \cite[Section 13.2]{DS2011}. However, it is not obvious that $TF$ then satisfies \eqref{IVP} with $f=0$. But choosing $b' \in (-1/2,b)$ we have $F \in X^{0,b;\infty}_{h(\xi)} \subset X^{0,b';2}_{h(\xi)}$. In the latter space, $\mathcal S$ is dense, and by a well-known result the linear operator $T$ is bounded from $X^{0,b';2}_{h(\xi)}(\delta)$ into $X^{0,b'+1;2}_{h(\xi)}(\delta)$ and $TF$ satisfies \eqref{IVP} on $(-\delta,\delta) \times \R^2$ with $f=0$.
\end{proof}

\begin{corollary}\label{IVPcor}
Under the assumptions of Lemma \ref{IVPlemma} we have
\[
  \sup_{t \in [-\delta,\delta]} \norm{u(t)}_{G^{\sigma,s}} \le \norm{f}_{G^{\sigma,s}} + c\delta^{1/2+b} \norm{F}_{X^{\sigma,s,b;\infty}_{h(\xi)}(\delta)}.
\]
\end{corollary}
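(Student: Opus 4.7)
The plan is to split the solution into its free and inhomogeneous parts and estimate each separately. Concretely, by linearity and the uniqueness statement in Lemma \ref{IVPlemma}, I decompose
\[
  u = u_1 + u_2, \qquad u_1(t) = U(t)f, \qquad u_2 \text{ solving } (-i\partial_t + h(D))u_2 = F \text{ with } u_2(0)=0.
\]
The first summand $u_1$ is handled by observing that $U(t) = e^{-ith(D)}$ is a Fourier multiplier with symbol of modulus one (since $h$ is real-valued), hence it is an isometry on $G^{\sigma,s}$. Thus $\sup_{t\in\R} \norm{u_1(t)}_{G^{\sigma,s}} = \norm{f}_{G^{\sigma,s}}$, which accounts for the first term on the right of the desired bound (with constant exactly $1$).

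For the second summand, I apply Lemma \ref{IVPlemma} to the data $(0,F)$ to obtain
\[
  \norm{u_2}_{X^{\sigma,s,1/2;1}_{h(\xi)}(\delta)} \le c\,\delta^{1/2+b} \norm{F}_{X^{\sigma,s,b;\infty}_{h(\xi)}(\delta)}.
\]
Then I combine this with Lemma \ref{Emb}, which embeds $X^{\sigma,s,1/2;1}_{h(\xi)}$ into $C(\R;G^{\sigma,s})$. To pass from the restriction space to the sup norm on $[-\delta,\delta]$, I use the definition \eqref{RestrictionNorm} of the restriction norm: for any $\varepsilon>0$ pick an extension $\tilde u_2 \in X^{\sigma,s,1/2;1}_{h(\xi)}$ of $u_2$ with $\norm{\tilde u_2}_{X^{\sigma,s,1/2;1}_{h(\xi)}} \le \norm{u_2}_{X^{\sigma,s,1/2;1}_{h(\xi)}(\delta)} + \varepsilon$, apply Lemma \ref{Emb} to $\tilde u_2$, restrict to $[-\delta,\delta]$, and send $\varepsilon\to 0$. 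This yields
\[
  \sup_{t \in [-\delta,\delta]} \norm{u_2(t)}_{G^{\sigma,s}} \le c\,\delta^{1/2+b} \norm{F}_{X^{\sigma,s,b;\infty}_{h(\xi)}(\delta)}.
\]
Adding the bounds for $u_1$ and $u_2$ gives the corollary, with the constant in front of $\norm{f}_{G^{\sigma,s}}$ equal to $1$ as required.

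The only subtle point is the need to obtain the sharp constant $1$ in front of $\norm{f}_{G^{\sigma,s}}$; applying Lemma \ref{IVPlemma} and Lemma \ref{Emb} to $u$ directly would only give a bound of the form $c\,(\norm{f}_{G^{\sigma,s}} + \delta^{1/2+b}\norm{F})$, which is insufficient for the iteration used in Section \ref{MainProof}. The free-evolution/Duhamel splitting combined with the isometry property of $U(t)$ on $G^{\sigma,s}$ is precisely what delivers the constant $1$, and no serious obstacle is expected beyond this bookkeeping.
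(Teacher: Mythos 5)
Your proposal is correct and follows essentially the same route as the paper: split off the free evolution $U(t)f$ via Duhamel, use that $U(t)$ preserves the $G^{\sigma,s}$ norm, and handle the inhomogeneous part by combining Lemma \ref{IVPlemma} (with zero data) with the embedding of Lemma \ref{Emb}. The extra care you take with extensions in the restriction norm is a fine, if routine, elaboration of the same argument.
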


\begin{proof}
For the first term in \eqref{Duhamel} we use $\norm{U(t)f}_{G^{\sigma,s}} \le \norm{f}_{G^{\sigma,s}}$, and for the second term we use Lemma \ref{Emb} and Lemma \ref{IVPlemma}.
\end{proof}

\section{Multilinear space-time estimates}\label{Spacetime}

Estimating the solution of \eqref{DKG} via duality (Lemma \ref{DualityLemma}), the need arises for the following trilinear space-time estimates, which we shall prove by combining dyadic bilinear $L^2$ space-times estimates (stated in Lemma \ref{BilinearLemma} below) with the null form estimate \eqref{NullEstimate}. The special case $\sigma=0$, $a=1/2$ and $b_0=b_1=b_2=1/3$ of the following theorem was proved in \cite{GP2010}.

\begin{theorem}\label{TrilinearTheorem} Assume that
\begin{itemize}
\item $a \in (1/4,3/4]$,
\item $b_0,b_1,b_2 \ge \max(1/4,3/4-a)$
\item $b_0+b_1+b_2 \ge 3/2-a$.
\end{itemize}
Then there exists a constant $c > 0$ such that the following estimates hold for all signs $s_0,s_1,s_2 \in \{+,-\}$ and for all $\sigma \ge 0$:
\begin{align}
  \label{Trilinear1}
  \Abs{ \int_{\R^{1+2}} \left(e^{\sigma\norm{D}} \phi \right)\innerprod{\beta\Pi_{s_1}\psi_1}{\Pi_{s_2}\psi_2} \, dt \, dx }
  &\le c
  \norm{\phi}_{X^{0,a,b_0;1}_{s_0}}
  \norm{\psi_1}_{X^{\sigma,0,b_1;1}_{s_1}}
  \norm{\psi_2}_{X^{\sigma,0,b_2;1}_{s_2}},
  \\
  \label{Trilinear2}
  \Abs{ \int_{\R^{1+2}} \phi \innerprod{\beta\Pi_{s_1}\psi_1}{\Pi_{s_2}e^{\sigma\norm{D}}\psi_2} \, dt \, dx }
  &\le c
  \norm{\phi}_{X^{\sigma,a,b_0;1}_{s_0}}
  \norm{\psi_1}_{X^{\sigma,0,b_1;1}_{s_1}}
  \norm{\psi_2}_{X^{0,0,b_2;1}_{s_2}}.
\end{align}
\end{theorem}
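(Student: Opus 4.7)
The plan has three stages. (i) Transfer the exponential weight onto the other two factors via the triangle inequality for the $\ell^1$-type norm $\norm{\cdot}$, reducing both estimates to a single $\sigma=0$ trilinear inequality. (ii) Decompose each factor dyadically in spatial frequency and modulation and invoke the spinorial null structure \eqref{NullEstimate}--\eqref{Commutation} to extract a factor of the angle between the two spinor frequencies. (iii) Apply the dyadic bilinear $L^2$ estimates (Lemma \ref{BilinearLemma}) to the pairing containing the largest modulation and sum the resulting series over the dyadic parameters.

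Step (i) is enabled by the fact that on the support of the integrand the three spatial frequencies $\xi_0,\xi_1,\xi_2$ of $\phi$, $\psi_1$ and $\overline{\psi_2}$ satisfy a linear relation $\xi_0+\xi_1\pm\xi_2=0$, so the triangle inequality for $\norm{\cdot}$ yields both $\norm{\xi_0}\le\norm{\xi_1}+\norm{\xi_2}$ and $\norm{\xi_2}\le\norm{\xi_0}+\norm{\xi_1}$. For \eqref{Trilinear1} the first of these gives $e^{\sigma\norm{\xi_0}}\le e^{\sigma\norm{\xi_1}}e^{\sigma\norm{\xi_2}}$, transferring the weight from $\phi$ onto the two spinors; for \eqref{Trilinear2} the second performs the analogous transfer from $\psi_2$ onto $\phi$ and $\psi_1$. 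Both inequalities thereby reduce to the unweighted ($\sigma=0$) trilinear estimate applied to suitably reweighted functions.

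For the $\sigma=0$ estimate, after Plancherel and dyadic decomposition I use \eqref{Commutation} to rewrite $\beta\Pi(s_1\xi_1)=\Pi(-s_1\xi_1)\beta$, so that \eqref{NullEstimate} produces an angular factor $\angle(s_1\xi_1,-s_2\xi_2)$. The standard modulation calculation, combining $\tau_j+h_j(\xi_j)\in S_{L_j}$ with the time-frequency constraint, yields a bound of the schematic form
\[
  \angle(s_1\xi_1,-s_2\xi_2)^2 \lesssim \frac{\Lmax}{\min(N_1,N_2)},
\]
with minor modifications to accommodate the mismatch between the symbols $\abs{\xi}$ and $\angles{\xi}$ when $N_0$ is small. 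Inserting this null-form gain, applying Lemma \ref{BilinearLemma} to whichever bilinear pairing contains $\Lmax$, and pulling out the weights $N_0^{-a}L_0^{-b_0}L_1^{-b_1}L_2^{-b_2}$ then reduces the problem to summing a geometric series over $N_j,L_j$; the numerology $a\in(1/4,3/4]$, $b_i\ge\max(1/4,3/4-a)$ and $b_0+b_1+b_2\ge 3/2-a$ is exactly what is needed for convergence in every case.

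The main obstacle is the case analysis in steps (ii)--(iii): one must split according to the ordering $\Nmin\le\Nmed\le\Nmax$ of the spatial frequencies (high--low versus high--high spinor interactions) and according to which of $L_0,L_1,L_2$ dominates, since both the angular bound and the natural choice of bilinear pairing depend on these cases. The constraint $b_i\ge 3/4-a$ is saturated in the low-modulation regime, where the null-form gain only partially compensates for the $N_0^{-a}$ loss on $\phi$, while $b_0+b_1+b_2\ge 3/2-a$ is saturated when $\Lmax$ is large; balancing these two regimes simultaneously is the technical heart of the argument.
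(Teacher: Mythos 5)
Your overall strategy coincides with the paper's: the weight transfer via $e^{\sigma\norm{\xi}}\le e^{\sigma\norm{\eta-\xi}}e^{\sigma\norm{\eta}}$ (using the convolution constraint on the frequencies), reducing both \eqref{Trilinear1} and \eqref{Trilinear2} to a single unweighted trilinear bound; the extraction of the angle $\theta_{12}=\angle(s_1(\eta-\xi),s_2\eta)$ from \eqref{Commutation} and \eqref{NullEstimate}; and the dyadic decomposition followed by Cauchy--Schwarz and Lemma \ref{BilinearLemma} are exactly the steps taken in Section \ref{Spacetime}.

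There is, however, one concrete gap in your steps (ii)--(iii). In the high--high interaction $N_0\ll N_1\sim N_2$ with $\Lmax=L_0$, the only angle bound you state, $\theta_{12}\lesssim(\Lmax/\Nmin^{12})^{1/2}=(L_0/N_1)^{1/2}$, cannot close the argument: the factor $L_0^{1/2}$ must be absorbed by $L_0^{-b_0}$, but the hypotheses only give $b_0\ge\max(1/4,3/4-a)$, which is strictly less than $1/2$ for $a>1/4$ (indeed $b_0=1/3$ is the choice used in the application). The paper handles this regime with two additional ingredients absent from your sketch: when $s_1=s_2$, the purely geometric bound $\theta_{12}\lesssim N_0/N_1$ (valid because $\abs{\xi}\lesssim N_0\ll\abs{\eta}\sim N_1$), interpolated with the modulation bound; and when $s_1\ne s_2$, the improved equal-sign bilinear estimate $C(\mathbf N,\mathbf L)=c(N_0L_1L_2)^{1/2}$ from the last clause of Lemma \ref{BilinearLemma}, which applies because the signs become equal after the sign reversal caused by the reflection in the argument of $\widehat{\psi_1}$. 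Without one of these, your claim that the stated numerology ``is exactly what is needed for convergence in every case'' is not justified. The remainder of your outline --- case $N_1\lesssim N_0\sim N_2$ and the subcases of $N_0\ll N_1\sim N_2$ with $\Lmax\in\{L_1,L_2\}$ --- does go through as you describe, with the interpolation exponent $\mu=3/4-a$ in the angle bound.
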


The proof is given at the end of this section. Before proceeding we record the following consequence of Theorem \ref{TrilinearTheorem}.

\begin{corollary}\label{BilinearCor}
Under the assumptions of Theorem \ref{TrilinearTheorem} there exists $c > 0$ such that for all $\sigma \ge 0$, $\delta \in (0,1]$ and signs $s_0,s_1,s_2 \in \{+,-\}$ we have the estimates
\begin{align*}
  \norm{\innerprod{\beta\Pi_{s_1}\psi_1}{\Pi_{s_2}\psi_2}}_{X_{s_0}^{\sigma,-a,-b_0;\infty}(\delta)}
  &\le c \delta^{1-b_1-b_2}
  \norm{\psi_1}_{X^{\sigma,0,1/2;1}_{s_1}(\delta)}
  \norm{\psi_2}_{X^{\sigma,0,1/2;1}_{s_2}(\delta)},
  \\
  \norm{\Pi_{s_2}\left(\phi \beta\Pi_{s_1}\psi_1\right)}_{X_{s_2}^{\sigma,0,-b_2;\infty}(\delta)}
  &\le c \delta^{1-b_0-b_1}
  \norm{\phi}_{X^{\sigma,a,1/2;1}_{s_0}(\delta)}
  \norm{\psi_1}_{X^{\sigma,0,1/2;1}_{s_1}(\delta)}.
\end{align*}
\end{corollary}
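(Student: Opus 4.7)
The plan is to deduce both estimates from Theorem \ref{TrilinearTheorem} by combining duality (Lemma \ref{DualityLemma}), passage to the restriction spaces via \eqref{RestrictionNorm}, and time-localization through Lemma \ref{CutoffLemma}. I would treat the bilinear estimate in detail and then indicate the (parallel) modifications needed for the trilinear one.

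Set $F = \innerprod{\beta\Pi_{s_1}\psi_1}{\Pi_{s_2}\psi_2}$. Since $e^{\sigma\norm{D}}$ is a self-adjoint Fourier multiplier, the map $u \mapsto e^{\sigma\norm{D}} u$ is an isometric isomorphism between $X^{\sigma,s,b;p}_{h(\xi)}$ and $X^{0,s,b;p}_{h(\xi)}$. Applying Lemma \ref{DualityLemma} to $e^{\sigma\norm{D}}F$ (and invoking (i) to restrict the test class to $\mathcal S$, since $p=\infty>1$) gives
\[
  \norm{F}_{X^{\sigma,-a,-b_0;\infty}_{s_0}}
  =
  \norm{e^{\sigma\norm{D}} F}_{X^{0,-a,-b_0;\infty}_{s_0}}
  =
  (2\pi)^{3}\sup_{\phi}\;\Abs{\int F\,\overline{e^{\sigma\norm{D}}\phi}\,dt\,dx},
\]
where the supremum runs over $\phi \in \mathcal S$ with $\norm{\phi}_{X^{0,a,b_0;1}_{s_0}}=1$. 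Each such pairing is, up to a harmless complex conjugation on $\phi$, exactly of the form controlled by \eqref{Trilinear1}, and is therefore bounded by $c\,\norm{\psi_1}_{X^{\sigma,0,b_1;1}_{s_1}}\norm{\psi_2}_{X^{\sigma,0,b_2;1}_{s_2}}$. Passing to the restriction space in the standard way---apply the global bound to arbitrary extensions of $\psi_1,\psi_2$ and take infima using \eqref{RestrictionNorm}---yields the same estimate with all three norms replaced by their $(\delta)$-restricted versions. Finally Lemma \ref{CutoffLemma} gives
\[
  \norm{\psi_i}_{X^{\sigma,0,b_i;1}_{s_i}(\delta)}
  \le
  c\,\delta^{1/2-b_i}\,\norm{\psi_i}_{X^{\sigma,0,1/2;1}_{s_i}(\delta)},
  \qquad i=1,2,
\]
and multiplying the two $\delta$-factors delivers the announced $\delta^{1-b_1-b_2}$.

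For the second estimate, set $G = \Pi_{s_2}(\phi\,\beta\,\Pi_{s_1}\psi_1)$. The same duality argument expresses $\norm{G}_{X^{\sigma,0,-b_2;\infty}_{s_2}}$ as a supremum of pairings of the form $\int G\,\overline{e^{\sigma\norm{D}}\psi_2}\,dt\,dx$ over $\psi_2$ in the unit sphere of $X^{0,0,b_2;1}_{s_2}$. Since the matrix-valued projector $\Pi_{s_2}$ is Hermitian in $\C^2$ and commutes with $e^{\sigma\norm{D}}$, moving it onto the test function recasts the pairing precisely as $\int \phi\,\innerprod{\beta\Pi_{s_1}\psi_1}{\Pi_{s_2} e^{\sigma\norm{D}}\psi_2}\,dt\,dx$, which is the integrand appearing in \eqref{Trilinear2}. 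That estimate bounds the pairing, and applying Lemma \ref{CutoffLemma} to $\phi$ (to pass from level $b_0$ to $1/2$) and to $\psi_1$ (from $b_1$ to $1/2$) produces the factor $\delta^{(1/2-b_0)+(1/2-b_1)}=\delta^{1-b_0-b_1}$.

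I do not foresee any genuine obstacle here: the whole argument is formal once Theorem \ref{TrilinearTheorem} is in hand. The only mildly delicate points are (a) transferring Lemma \ref{DualityLemma} from the $\sigma=0$ case to general $\sigma\ge 0$ via the isometry $e^{\sigma\norm{D}}$, which is immediate since the weight is a real-valued Fourier multiplier, and (b) the extension/infimum maneuver for restriction spaces, which is standard.
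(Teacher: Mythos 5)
Your argument is correct and follows essentially the same route as the paper, which likewise deduces the corollary from Theorem \ref{TrilinearTheorem} via the duality of Lemma \ref{DualityLemma}, an extension argument for the restriction norms, and Lemma \ref{CutoffLemma} to produce the powers of $\delta$; the paper simply performs these steps in the reverse order (time-localization first, then duality) and leaves the self-adjointness/conjugation details you spell out implicit.
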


\begin{proof}
We only give the details for the first estimate. By Lemma \ref{CutoffLemma} we reduce to
\[
  \norm{\innerprod{\beta\Pi_{s_1}\psi_1}{\Pi_{s_2}\psi_2}}_{X_{s_0}^{\sigma,-a,-b_0;\infty}(\delta)}
  \le c
  \norm{\psi_1}_{X^{\sigma,0,b_1;1}_{s_1}(\delta)}
  \norm{\psi_2}_{X^{\sigma,0,b_2;1}_{s_2}(\delta)}.
\]
Working with extensions, we note that it suffices to prove the estimate without the restriction to the time interval $(-\delta,\delta)$. Thus we need to prove
\[
  \norm{\innerprod{\beta\Pi_{s_1}\psi_1}{\Pi_{s_2}\psi_2}}_{X_{s_0}^{\sigma,-a,-b_0;\infty}}
  \le c
  \norm{\psi_1}_{X^{\sigma,0,b_1;1}_{s_1}}
  \norm{\psi_2}_{X^{\sigma,0,b_2;1}_{s_2}},
\]
but this follows from Theorem \ref{TrilinearTheorem} via Lemma \ref{DualityLemma}.
\end{proof}

There is no $L^4$ space-time estimate for free solutions of the wave equation in two space dimensions, and hence no $L^2$ product estimate. As observed in \cite{Selberg2011}, one can nevertheless prove Fourier restriction estimates on truncated thickened null cones in space-time, such as the ones in the following lemma, which will be used to prove Theorem \ref{TrilinearTheorem}.

Some notation: Given dyadic numbers $N_0,N_1,N_2,L_0,L_1,L_2 \ge 1$, we denote by $\Lmin$, $\Lmed$ and $\Lmax$ the minimum, median and maximum of $L_0$, $L_1$ and $L_2$, and similarly for the $N$'s. Moreover, for $j,k \in \{0,1,2\}$, $j < k$, we denote by $\Lmin^{jk}$ (resp.\ $\Lmax^{jk}$) the minimum (resp.\ the maximum) of $L_j$ and $L_k$, and similarly for the $N$'s. We also write $\mathbf N = (N_0,N_1,N_2)$ and $\mathbf L = (L_0,L_1,L_2)$. We will use the notation $N \lesssim N'$, $N \ll N'$ and $N \sim N'$ as shorthand for, respectively, $N \le cN'$, $N \le c^{-1}N'$ and $c^{-1}N' \le N \le cN'$, where $c$ is a sufficiently large absolute constant. From now on we use the notation $Q_L^{\pm}$ for the modulation operator $Q_L^{\pm\abs{\xi}}$ defined in the previous subsection (note that we could also have used $Q_L^{\pm\angles{\xi}}$).

\begin{lemma}\label{BilinearLemma}
There exists $c > 0$ such that for all dyadic numbers $N_j,L_j \ge 1$, $j,k \in \{0,1,2\}$, and all signs $s_0,s_1,s_2 \in \{+,-\}$ we have the bilinear $L^2$ space-time estimate
\[
  \norm{P_{N_0} Q_{L_0}^{s_0} \left( P_{N_1} Q_{L_1}^{s_1} u_1 \cdot P_{N_2} Q_{L_2}^{s_2} u_2 \right)}_{L^2(\R^{1+2})}
  \le C(\mathbf N,\mathbf L)
  \norm{u_1}_{L^2(\R^{1+2})}
  \norm{u_2}_{L^2(\R^{1+2})},
\]
where
\begin{multline*}
  C(\mathbf N,\mathbf L)
  = c \min\left[
  \bigl( (\Nmin)^2 \Lmin \bigr)^{1/2},
  \bigl( \Nmin\Lmin^{12} \bigr)^{1/2} \bigl( \Nmin^{12}\Lmax^{12} \bigr)^{1/4},
  \right.
  \\
  \bigl( \Nmin\Lmin^{01} \bigr)^{1/2} \bigl( \Nmin^{01}\Lmax^{01} \bigr)^{1/4},
  \left.\bigl( \Nmin\Lmin^{02} \bigr)^{1/2} \bigl( \Nmin^{02}\Lmax^{02} \bigr)^{1/4}
  \right].
\end{multline*}
Moreover, in the case $s_1=s_2$ and $N_0 \ll N_1 \sim N_2$, the above estimate holds also with $C(\mathbf N,\mathbf L) = c (N_0L_1L_2)^{1/2}$.
\end{lemma}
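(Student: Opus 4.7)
The approach is the classical one for bilinear Fourier restriction estimates: pass to the Fourier side via Plancherel's theorem and reduce the inequality to a volume computation for the intersection of two hyperboloidal slabs. Writing $R_j = \{(\tau, \xi) \in \R^{1+2} : \abs{\xi} \sim N_j,\ \abs{\tau + s_j \abs{\xi}} \sim L_j\}$ and absorbing the localizations $P_{N_j}Q_{L_j}^{s_j}$ into the $u_j$'s, the LHS equals a constant multiple of $\bignorm{\chi_{R_0}(\widehat{u_1} * \widehat{u_2})}_{L^2}$. Applying Cauchy--Schwarz pointwise to the convolution and then Fubini yields
\[
\text{LHS}^2 \lesssim \mathcal{V} \cdot \norm{u_1}_{L^2}^2 \norm{u_2}_{L^2}^2,
\qquad
\mathcal{V} = \sup_{(\tau_0,\xi_0) \in R_0} \bigabs{R_1 \cap \bigl((\tau_0,\xi_0) - R_2\bigr)}.
\]
Rewriting the bilinear form as a trilinear expression $\int v_0 v_1 v_2\, dt\, dx$ and dualizing over either of the other two factors produces two symmetric analogues, so $\mathcal{V}$ may be replaced by the minimum of the three volumes corresponding to the choices $(jk)\in\{(12),(01),(02)\}$.

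For each choice, the $\tau$-integration is constrained by two modulation conditions (indexed by $j$ and $k$), contributing a factor $\lesssim \Lmin^{jk}$, so the problem reduces to bounding the area of intersection of two annuli $\{\abs{\eta}\sim N_j\} \cap \{\abs{\zeta - \eta}\sim N_k\}$ in $\R^2$, where $\zeta$ is the displacement determined by the remaining frequency. The trivial estimate by the area of the smaller annulus yields $\lesssim (\Nmin^{jk})^2$; combined with the $\Lmin^{jk}$ factor and minimization over the three symmetries, this produces the first entry $((\Nmin)^2\Lmin)^{1/2}$ of $C(\mathbf{N},\mathbf{L})$.

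The sharper entries $(\Nmin \Lmin^{jk})^{1/2}(\Nmin^{jk}\Lmax^{jk})^{1/4}$ require a refined area computation. The key observation is that the modulation condition $\abs{\tau + s_j\abs{\xi}}\sim L_j$ also constrains $\abs{\xi}$ to an interval of length $\sim L_j$ once $\tau$ is fixed, replacing the thick annulus $\{\abs{\xi}\sim N_j\}$ by a thin annulus of radial width $\sim \min(N_j,L_j)$. One then uses the change of variables $\xi_1 \mapsto (\abs{\xi_1}^2, \abs{\zeta-\xi_1}^2)$, whose Jacobian is $4\abs{\xi_1 \wedge (\zeta - \xi_1)}= 4\abs{\xi_1}\abs{\zeta-\xi_1}\sin\vangle(\xi_1,\zeta-\xi_1)$, encoding transversality of the two level circles. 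Interpolating between the trivial area bound and the transversal one, and optimizing over which modulation pair is used, delivers the remaining three entries of the minimum.

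The case $s_1=s_2$, $N_0 \ll N_1 \sim N_2$ requires a separate argument exploiting additional structure. Adding the modulation identities $\tau_j + s_j\abs{\xi_j} = O(L_j)$ for $j=1,2$ yields $\tau_0 + s_1(\abs{\xi_1}+\abs{\xi_0-\xi_1}) = O(L_1+L_2)$; for fixed $(\tau_0,\xi_0)$, the level sets of $\xi_1 \mapsto \abs{\xi_1}+\abs{\xi_0-\xi_1}$ are ellipses with foci $0$ and $\xi_0$. Since $\abs{\xi_0}\sim N_0 \ll N_1$, these ellipses are nearly circular and the annular ellipse of width $\sim L_1+L_2$ has a controlled area. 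Combining this constraint with a further individual modulation restriction on $\abs{\xi_1}$, one bounds the $\xi_1$-area by $\lesssim N_0 L_{\max}^{12}$, which together with $\Lmin^{12}$ from the $\tau$-integration gives $\mathcal{V}\lesssim N_0 L_1 L_2$. The principal obstacle throughout is the absence of an $L^4$ Strichartz estimate for the wave equation in two space dimensions, which forces the area estimates to be carried out by direct geometry rather than by interpolation with classical Strichartz bounds; sharpness is then delicate and depends on tracking how the two annuli intersect as a function of the triangle relation among the $N_j$'s.
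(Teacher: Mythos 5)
The paper does not actually prove this lemma: the four-term minimum is quoted from \cite[Theorem 2.1]{Selberg2011} and the final statement from Tao or Foschi--Klainerman. Your Cauchy--Schwarz/volume reduction, the dualization giving access to all three pairs $(jk)$, and the area estimates for intersections of thickened circles via the Jacobian $\sin\vangle(\xi_1,\zeta-\xi_1)$ (transversal case) versus the square-root gain near tangency (degenerate case) are exactly the strategy of that cited reference, so for the four-term bound your sketch is methodologically sound, even if it suppresses the case analysis behind the exponents $1/2$ and $1/4$ and the role of the third frequency constraint in producing the global $\Nmin$.

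The genuine gap is in the last case. The bound $(N_0L_1L_2)^{1/2}$ for $s_1=s_2$, $N_0 \ll N_1 \sim N_2$ is \emph{not} obtainable from a single global Cauchy--Schwarz, because the relevant fibre volume is not $\lesssim N_0L_1L_2$. Take $s_1=s_2=+$, $L_1=L_2=1$, $\xi_0=(N_0,0)$ and $\tau_0=-2N_1$ (so the output modulation is forced to be $L_0\sim N_1$, which the lemma permits). For every $\eta$ on the full elliptic annulus $\{|\eta|+|\xi_0-\eta| = -\tau_0+O(1)\}$ there is an admissible $\lambda$-interval of length $\sim 1$, and that annulus is a width-$O(1)$ neighbourhood of a closed curve of perimeter $\sim N_1$; hence $\mathcal V \sim N_1 = N_1L_1L_2$, not $\lesssim N_0 L_1L_2$ as you assert. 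The ``further individual modulation restriction on $\abs{\xi_1}$'' cannot rescue this: on that ellipse $\abs{\xi_1}$ already varies only over an interval of length $\sim \abs{\xi_0} = N_0$, so the extra constraint is nearly vacuous and near the tangential points still leaves arcs of length $\sim N_1(1/N_0)^{1/2}$. So Cauchy--Schwarz yields only $(N_1L_1L_2)^{1/2}$ here. The estimate is nevertheless true because $\mathcal V$ is large only on a small set of outputs, and capturing this requires an additional ingredient: e.g.\ decompose the two input annuli into angular caps of diameter $\sim N_0$, observe that for a fixed output frequency of size $N_0$ only $O(1)$ nearly antipodal pairs of caps interact, apply the volume bound cap-wise (where the elliptic arc has length $\lesssim N_0$, giving $\mathcal V \lesssim N_0L_1L_2$), and sum using almost-orthogonality --- or compute the convolution of cone measures exactly. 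This is precisely why the paper defers this case to \cite[Proposition 9.1, Eq.~(66)]{Tao2001} or to \cite[Theorem 12.1, Eq.~(65)]{FoKl2000} together with the transfer principle.
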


\begin{proof}
The estimate is proved in \cite[Theorem 2.1]{Selberg2011}, except for the last statement about the special case $s_1=s_2$ and $N_0 \ll N_1 \sim N_2$, which is included in \cite[Proposition 9.1, Eq.~(66)]{Tao2001} or alternatively can be deduced from the free-wave estimate in \cite[Theorem 12.1, Eq.~(65)]{FoKl2000} via the transfer principle (by observing that the multiplier $D_-$ is of size $\lambda$, in the notation of that paper).
\end{proof}

\begin{remark}\label{BilinearRemark}
By Plancherel's theorem, the estimate in Lemma \ref{BilinearLemma} is equivalent to
\[
  \norm{I(\tau,\xi)}_{L^2_{\tau,\xi}}
  \le C(\mathbf N,\mathbf L)
  \norm{u_1}_{L^2(\R^{1+2})}
  \norm{u_2}_{L^2(\R^{1+2})},
\]
where
\begin{multline*}
  I(\tau,\xi) = \chi_{S_{N_0}}(\abs{\xi})
  \chi_{S_{L_0}}(\tau+s_0\abs{\xi_0})
  \\
  \times \int
  \Abs{\widehat{P_{N_1}Q^{s_1}_{L_1}u_1}(\tau-\lambda,\xi-\eta)}
  \Abs{\widehat{P_{N_2}Q^{s_2}_{L_2}u_2}(\lambda,\eta)} \, d\lambda \, d\eta,
\end{multline*}
and it is in this form that we will now apply the estimate.
\end{remark}

We are now in a position to prove the trilinear estimates.

\subsection{Proof of Theorem \ref{TrilinearTheorem}}

Using Plancherel's theorem, the self-adjointness of $\Pi(\xi)$, the sign-reversing identity \eqref{Commutation} and the null estimate \eqref{NullEstimate}, we bound the left side of \eqref{Trilinear1} by
\begin{align*}
  &\Abs{ \int e^{\sigma\norm{\xi}} \widehat\phi(\tau,\xi)
  \Innerprod{\beta\Pi\left(s_1(\eta-\xi)\right)\widehat{\psi_1}(\lambda-\tau,\eta-\xi)}
  {\Pi\left(s_2\eta\right)\widehat{\psi_2}(\lambda,\eta)} \, d\lambda \, d\tau \, d\eta \, d\xi }
  \\
  &
  = \Abs{ \int e^{\sigma\norm{\xi}}\widehat\phi(\tau,\xi)
  \Innerprod{\Pi\left(s_2\eta\right)\beta\Pi\left(s_1(\eta-\xi)\right)\widehat{\psi_1}(\lambda-\tau,\eta-\xi)}
  {\widehat{\psi_2}(\lambda,\eta)} \, d\lambda \, d\tau \, d\eta \, d\xi }
  \\
  &
  = \Abs{ \int e^{\sigma\norm{\xi}} \widehat\phi(\tau,\xi)
  \Innerprod{\beta\Pi\left(-s_2\eta\right)\Pi\left(s_1(\eta-\xi)\right)\widehat{\psi_1}(\lambda-\tau,\eta-\xi)}
  {\widehat{\psi_2}(\lambda,\eta)} \, d\lambda \, d\tau \, d\eta \, d\xi }
  \\
  &
  \le c \int \theta_{12}
  \Abs{\widehat\phi(\tau,\xi)}
  e^{\sigma\norm{\eta-\xi}}\Abs{\widehat{\psi_1}(\lambda-\tau,\eta-\xi)}
  e^{\sigma\norm{\eta}}\Abs{\widehat{\psi_2}(\lambda,\eta)} \, d\lambda \, d\tau \, d\eta \, d\xi,
\end{align*}
where
\begin{equation}\label{Angle}
  \theta_{12} = \angle\left(s_1(\eta-\xi),s_2\eta\right)
\end{equation}
and we used the triangle inequality to write
\[
  e^{\sigma\norm{\xi}} \le e^{\sigma\norm{\eta-\xi}}e^{\sigma\norm{\eta}}.
\]
Similarly, the left side of \eqref{Trilinear2} can be bounded by
\[
  c \int \theta_{12}
  e^{\sigma\norm{\xi}}\Abs{\widehat\phi(\tau,\xi)}
  e^{\sigma\norm{\eta-\xi}}\Abs{\widehat{\psi_1}(\lambda-\tau,\eta-\xi)}
  \Abs{\widehat{\psi_2}(\lambda,\eta)} \, d\lambda \, d\tau \, d\eta \, d\xi.
\]
Thus both \eqref{Trilinear1} and \eqref{Trilinear2} reduce to the estimate (without $\sigma$)
\begin{multline}\label{FourierEst}
  \int \theta_{12}
  \Abs{\widehat\phi(\tau,\xi)}
  \Abs{\widehat{\psi_1}(\lambda-\tau,\eta-\xi)}
  \Abs{\widehat{\psi_2}(\lambda,\eta)} \, d\lambda \, d\tau \, d\eta \, d\xi
  \\
  \le c \norm{\phi}_{X^{a,b_0;1}_{s_0}}
  \norm{\psi_1}_{X^{0,b_1;1}_{s_1}}
  \norm{\psi_2}_{X^{0,b_2;1}_{s_2}},
\end{multline}
which we now prove.

By dyadic decomposition we bound the left side by a constant times
\[
  \sum_{\mathbf N, \mathbf L}
  \int \theta_{12}
  \Abs{\widehat{P_{N_0}Q^{s_0}_{L_0}}\phi(\tau,\xi)}
  \Abs{\widehat{P_{N_1}Q^{s_1}_{L_1}\psi_1}(\lambda-\tau,\eta-\xi)}
  \Abs{\widehat{P_{N_2}Q^{s_2}_{L_2}\psi_2}(\lambda,\eta)} \, d\lambda \, d\tau \, d\eta \, d\xi,
\]
where the sum is over dyadic $N_j,L_j \ge 1$, $j = 0,1,2$. The integral vanishes unless the two largest $N$'s are comparable, so we reduce to the cases (i) $N_0 \ll N_1 \sim N_2$, (ii) $N_1 \lesssim N_0 \sim N_2$ or (iii) $N_2 \lesssim N_0 \sim N_1$. By symmetry, it suffices to consider cases (i) and (ii).

To estimate the integral we will apply Cauchy-Schwarz with respect to $(\tau,\xi)$ followed by Lemma \ref{BilinearLemma} with $\widehat{u_1}(\tau-\lambda,\xi-\eta)=\abs{\widehat{\psi_1}(\lambda-\tau,\eta-\xi)}$ and $\widehat{u_2}(\lambda,\eta)=\abs{\widehat{\psi_2}(\lambda,\eta)}$, cp.~Remark \ref{BilinearRemark}. It should be kept in mind that due to the sign change in the argument of $\widehat{u_1}$, the sign $s_1$ is reversed when we apply Lemma \ref{BilinearLemma}.

By \cite[Lemma 2.2]{Selberg2011},
\begin{equation}\label{AngleEst}
  \theta_{12} \le c \left( \frac{\Lmax}{\Nmin^{12}} \right)^{1/2},
\end{equation}
so applying Cauchy-Schwarz and Lemma \ref{BilinearLemma} we bound by a constant times
\[
  S = \sum_{\mathbf N, \mathbf L}
  \left( \min\left(1,\frac{\Lmax}{\Nmin^{12}}\right) \right)^{1/2}
  \frac{C(\mathbf N,\mathbf L)}{N_0^a L_0^{b_0} L_1^{b_1} L_2^{b_2}}
  \alpha_{N_0,L_0} \beta_{N_1,L_1} \gamma_{N_2,L_2},
\]
where
\begin{align*}
  \alpha_{N_0,L_0} &= N_0^a L_0^{b_0} \norm{P_{N_0}Q^{s_0}_{L_0}\phi}_{L^2(\R^{1+2})},
  \\
  \beta_{N_1,L_1} &= L_1^{b_1} \norm{P_{N_1}Q^{s_1}_{L_1}\psi_1}_{L^2(\R^{1+2})},
  \\
  \gamma_{N_2,L_2} &= L_2^{b_2} \norm{P_{N_2}Q^{s_2}_{L_2}\psi_2}_{L^2(\R^{1+2})},
\end{align*}
and $C(\mathbf N,\mathbf L)$ is as in Lemma \ref{BilinearLemma}. It remains to prove that
\begin{equation}\label{Sest}
  S \le c \sum_{\mathbf L} \left( \sum_{N_0} \alpha_{N_0,L_0}^2 \right)^{1/2}
  \left( \sum_{N_1} \beta_{N_1,L_1}^2 \right)^{1/2}
  \left( \sum_{N_2} \gamma_{N_2,L_2}^2 \right)^{1/2}.
\end{equation}

\subsubsection{Case (ii), $N_1 \lesssim N_0 \sim N_2$}

Then $C(\mathbf N,\mathbf L) \le c N_1^{1/2} N_0^{1/4} \Lmin^{1/2} \Lmed^{1/4}$, hence we bound the corresponding part of the sum $S$ by a constant times
\[
  \sum_{\mathbf N, \mathbf L} \mathbb{1}_{N_1 \lesssim N_0 \sim N_2}
  \left( \frac{\Lmax}{N_1} \right)^{\mu}
  \frac{N_1^{1/2} N_0^{1/4} \Lmin^{1/2} \Lmed^{1/4}}{N_0^a L_0^{b_0} L_1^{b_1} L_2^{b_2}}
  \alpha_{N_0,L_0} \beta_{N_1,L_1} \gamma_{N_2,L_2}
\]
for any $\mu \in [0,1/2]$. Clearly
\begin{equation}\label{Lbound}
  \frac{\Lmin^{1/2} \Lmed^{1/4} \Lmax^\mu}{L_0^{b_0} L_1^{b_1} L_2^{b_2}} \le 1
\end{equation}
provided that
\begin{gather}
  \label{cond1}
  b_0+b_1+b_2 \ge \frac34 + \mu,
  \\
  \label{cond2}
  b_0,b_1,b_2 \ge \max\left(\frac14,\mu\right).
\end{gather}
Then we are left with
\[
  \sum_{\mathbf N, \mathbf L} \mathbb{1}_{N_1 \lesssim N_0 \sim N_2}
  \frac{N_1^{1/2-\mu}}{N_0^{a-1/4}}
  \alpha_{N_0,L_0} \beta_{N_1,L_1} \gamma_{N_2,L_2}.
\]
Assuming
\begin{equation}\label{cond3}
  0 \le \mu < \frac12
\end{equation}
we sum $N_1$ and bound by
\[
  \sum_{\mathbf L} \left( \sup_{N_1} \beta_{N_1,L_1} \right) 
  \sum_{N_0,N_2} \mathbb{1}_{N_0 \sim N_2}\frac{N_0^{1/2-\mu}}{N_0^{a-1/4}}
  \alpha_{N_0,L_0} \gamma_{N_2,L_2},
\]
so if $a + \mu - 3/4 \ge 0$, we can sum $N_1 \sim N_2$ by Cauchy-Schwarz to obtain the desired estimate \eqref{Sest}. We therefore choose $\mu = 3/4-a$. Then the conditions \eqref{cond1}, \eqref{cond2} and \eqref{cond3} correspond exactly to the assumptions of the lemma. This concludes the proof in case (ii).

\subsubsection{Case (i), $N_0 \ll N_1 \sim N_2$}

First, if $\Lmax = L_1$ or $\Lmax = L_2$, then Lemma \ref{BilinearLemma} gives $C(\mathbf N,\mathbf L) \le c N_0^{3/4} \Lmin^{1/2} \Lmed^{1/4}$, hence we bound the corresponding part of $S$ by a constant times
\begin{equation}\label{Bound1}
  \sum_{\mathbf N, \mathbf L} \mathbb{1}_{N_0 \ll N_1 \sim N_2}
  \left( \frac{\Lmax}{N_1} \right)^{\mu}
  \frac{N_0^{3/4} \Lmin^{1/2} \Lmed^{1/4}}{N_0^a L_0^{b_0} L_1^{b_1} L_2^{b_2}}
  \alpha_{N_0,L_0} \beta_{N_1,L_1} \gamma_{N_2,L_2}.
\end{equation}
Taking $\mu=3/4-a$ as above, we apply \eqref{Lbound} and reduce to
\[
  \sum_{\mathbf N, \mathbf L} \mathbb{1}_{N_0 \ll N_1 \sim N_2}
  \left(\frac{N_0}{N_1}\right)^{3/4-a}
  \alpha_{N_0,L_0} \beta_{N_1,L_1} \gamma_{N_2,L_2},
\]
so if $a < 3/4$, we can sum $N_0$ and then sum $N_1 \sim N_2$ by Cauchy-Schwarz to get \eqref{Sest}. If $a=3/4$, we use instead $C(\mathbf N,\mathbf L) \le c N_0 \Lmin^{1/2}$ and take $\mu=1/4$, yielding
\begin{equation}\label{Bound2}
  \sum_{\mathbf N, \mathbf L} \mathbb{1}_{N_0 \ll N_1 \sim N_2}
  \left( \frac{\Lmax}{N_1} \right)^{1/4}
  \frac{N_0 \Lmin^{1/2}}{N_0^{3/4} L_0^{b_0} L_1^{b_1} L_2^{b_2}}
  \alpha_{N_0,L_0} \beta_{N_1,L_1} \gamma_{N_2,L_2}.
\end{equation}
Now we use the fact that
\[
  \frac{\Lmin^{1/2} \Lmax^{1/4}}{L_0^{b_0} L_1^{b_1} L_2^{b_2}} \le 1
\]
if $b_0+b_1+b_2 \ge 3/4$ and $b_0,b_1,b_2 \ge 1/4$, which are consistent with the assumptions of the lemma when $a=3/4$, so we reduce to
\[
  \sum_{\mathbf N, \mathbf L} \mathbb{1}_{N_0 \ll N_1 \sim N_2}
  \left(\frac{N_0}{N_1}\right)^{1/4}
  \alpha_{N_0,L_0} \beta_{N_1,L_1} \gamma_{N_2,L_2},
\]
and again obtain the desired bound \eqref{Sest}.

It remains to consider the subcase $\Lmax = L_0$ of case (i). The argument used for $a=3/4$ above still applies and yields \eqref{Bound2}, so it remains to consider $a < 3/4$. If $s_1 \neq s_2$, then by Lemma \ref{BilinearLemma} (with signs $-s_1$ and $s_2$, so equal signs) we have the estimate $C(\mathbf N,\mathbf L) \le c N_0^{1/2} \Lmin^{1/2} \Lmed^{1/2}$. Interpolating this with $C(\mathbf N,\mathbf L) \le cN_0 \Lmin^{1/2}$ gives $C(\mathbf N,\mathbf L) \le c N_0^{3/4} \Lmin^{1/2} \Lmed^{1/4}$ and hence we get again \eqref{Bound1}.

This leaves us with $s_1 = s_2$ in case (i) with $\Lmax=L_0$. From \eqref{Angle} we have $\theta_{12} \le c N_0/N_1$, since $\abs{\xi} \lesssim N_0 \ll \abs{\eta} \sim N_1$. Interpolating this with \eqref{AngleEst} gives
\[
  \theta_{12} \le c \left(\frac{N_0}{N_1}\right)^{1-2\mu} \left( \frac{\Lmax}{N_1} \right)^\mu
\]
for $\mu \in [0,1/2]$. Invoking Lemma \ref{BilinearLemma} with $C(\mathbf N,\mathbf L) \le c N_0^{1/2} N_1^{1/4} \Lmin^{1/2} \Lmed^{1/4}$, we obtain the bound
\[
  \sum_{\mathbf N, \mathbf L} \mathbb{1}_{N_0 \ll N_1 \sim N_2}
  \frac{N_0^{3/2-2\mu-a}}{N_1^{3/4-\mu}}
  \frac{\Lmin^{1/2} \Lmed^{1/4} \Lmax^\mu}{L_0^{b_0} L_1^{b_1} L_2^{b_2}}
  \alpha_{N_0,L_0} \beta_{N_1,L_1} \gamma_{N_2,L_2}.
\]
Taking $\mu=3/4-a$ and applying \eqref{Lbound} we reduce to
\[
  \sum_{\mathbf N, \mathbf L} \mathbb{1}_{N_0 \ll N_1 \sim N_2}
  \left( \frac{N_0}{N_1} \right)^a
  \alpha_{N_0,L_0} \beta_{N_1,L_1} \gamma_{N_2,L_2},
\]
so we only need $a > 0$ to sum $N_0$, and then we sum $N_1 \sim N_2$ by Cauchy-Schwarz. This concludes the proof of case (i) and of Theorem \ref{TrilinearTheorem}.

\section{Local existence}\label{Local}

In this section we prove the following local existence result, which is an extended version of Theorem \ref{LWPa}.

\begin{theorem}\label{LWP}
There exist $c, c_0 > 0$ such that for any $\sigma_0 \ge 0$ and any data \eqref{Splitdata}, the Cauchy problem \eqref{DKG} has a unique local solution $(\psi_+,\psi_-,\phi_+) \in C([-\delta,\delta];X_0)$, where
\[
  \delta = \frac{c_0}{1+a_0^2+b_0^2},
  \quad
  a_0^2 = \norm{f_+}_{G^{\sigma_0,0}}^2 + \norm{f_-}_{G^{\sigma_0,0}}^2,
  \quad
  b_0 = \norm{g_+}_{G^{\sigma_0,1/2}}.
\]
Moreover,
\[
  \sum_\pm \norm{\psi_\pm}_{X_\pm^{\sigma_0,0,1/2;1}(\delta)} \le 2ca_0
  \quad \text{and} \quad
  \norm{\phi_+}_{X_+^{\sigma_0,1/2,1/2;1}(\delta)} \le 2c(a_0+b_0).
\]
\end{theorem}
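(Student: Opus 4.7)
The plan is to set up a Picard iteration in the analytic Bourgain space
\[
  Y_\delta := X^{\sigma_0,0,1/2;1}_+(\delta) \times X^{\sigma_0,0,1/2;1}_-(\delta) \times X^{\sigma_0,1/2,1/2;1}_+(\delta),
\]
equipped with the natural product norm, defining the map $\Phi$ componentwise by Duhamel's formula \eqref{Duhamel} applied to the three equations in \eqref{DKG}. The $Y_\delta$-norm of $\Phi$ is controlled by Lemma \ref{IVPlemma}, so everything reduces to estimating each nonlinearity in a suitable $X^{\sigma_0,s,b;\infty}_\pm(\delta)$ norm with $-1/2<b<1/2$. Since all the bounds in Corollary \ref{BilinearCor} hold uniformly in $\sigma\ge 0$, passing from the $\sigma=0$ setting of \cite{GP2010} to the analytic setting costs nothing analytically---this is essentially observation (B1) of the introduction.

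For the bilinear terms I would apply Corollary \ref{BilinearCor} with the parameter choice $a=1/2$, $b_0=b_1=b_2=1/3$, which satisfies the hypotheses of Theorem \ref{TrilinearTheorem}. This yields, for each choice of signs $s_1,s_2$,
\[
  \bignorm{\innerprod{\beta\Pi_{s_1}\psi_1}{\Pi_{s_2}\psi_2}}_{X^{\sigma_0,-1/2,-1/3;\infty}_+(\delta)} \le c\delta^{1/3}\norm{\psi_1}_{X^{\sigma_0,0,1/2;1}_{s_1}(\delta)}\norm{\psi_2}_{X^{\sigma_0,0,1/2;1}_{s_2}(\delta)},
\]
and, after applying $\angles{D}^{-1}$, an identical bound for the Klein--Gordon nonlinearity in $X^{\sigma_0,1/2,-1/3;\infty}_+(\delta)$; the Dirac nonlinearity $\Pi_{s_2}((\re\phi_+)\beta\Pi_{s_1}\psi)$ is handled analogously by the second estimate of the Corollary. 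Combined with the factor $\delta^{1/2+b}=\delta^{1/6}$ from Lemma \ref{IVPlemma} at $b=-1/3$, each bilinear contribution to $\Phi$ gains an overall factor of $\delta^{1/2}$---precisely the estimate referenced as \eqref{Aest} in Remark \ref{AlgebraicRemark}. The residual linear term $-M\Pi_\pm\beta\psi$ is not covered by the Corollary, but it is controlled by the elementary chain
\[
  X^{\sigma_0,0,1/2;1}_\mp(\delta) \hookrightarrow C([-\delta,\delta];G^{\sigma_0,0}) \hookrightarrow L^2([-\delta,\delta];G^{\sigma_0,0}) \hookrightarrow X^{\sigma_0,0,0;\infty}_\pm(\delta)
\]
(using Lemma \ref{Emb}, H\"older in time with factor $(2\delta)^{1/2}$, and the trivial pointwise bound $\chi_{S_L}\le 1$); crucially, the last embedding is independent of the choice of sign $\pm$, so no sign-transfer issue arises. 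Combined with Lemma \ref{IVPlemma} at $b=0$, this contributes a factor of order $M\delta$ to $\Phi$.

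Writing $u=(\psi_+,\psi_-,\phi_+)$, assembling the above produces the self-map estimate
\[
  \norm{\Phi(u)}_{Y_\delta} \le c(a_0+b_0) + c(M\delta+\delta^{1/2})\norm{u}_{Y_\delta} + c\delta^{1/2}\norm{u}_{Y_\delta}^2,
\]
together with a parallel Lipschitz bound on $\Phi(u)-\Phi(u')$ obtained by polarization from the same bilinear and linear estimates. Setting $R=2c(a_0+b_0)$ and imposing $c\delta^{1/2}(1+R)<1/2$ (after absorbing $M$ into $c_0$) closes the contraction for $\delta\le c_0/(1+a_0^2+b_0^2)$, yielding the unique fixed point and the quantitative $X$-norm bounds stated in the theorem. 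Continuity into $X_0$ then follows from Lemma \ref{Emb}, and uniqueness of the $C^\infty$ solution from the remark after Theorem \ref{LWPa}. The main obstacle is the sharpness of the $\delta^{1/2}$ gain rather than any delicate harmonic analysis: as Remark \ref{AlgebraicRemark} emphasizes, $\delta^{1/2}$ is the best exponent available within the framework of Theorem \ref{TrilinearTheorem}, and this sharpness is precisely what forces the quadratic denominator in the formula for $\delta$---and, downstream, the exponential rather than algebraic lower bound on $\sigma(t)$.
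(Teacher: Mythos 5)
Your proposal is essentially the paper's proof: a Picard/contraction argument in the same product of spaces, using Lemma \ref{IVPlemma} for the Duhamel terms, Corollary \ref{BilinearCor} with $a=1/2$, $b_0=b_1=b_2=1/3$ for both bilinear nonlinearities (giving the $\delta^{1/3}\cdot\delta^{1/6}=\delta^{1/2}$ gain), and a separate elementary treatment of the mass term. Your handling of the mass term via the chain $X^{\sigma_0,0,1/2;1}_\mp(\delta)\hookrightarrow C([-\delta,\delta];G^{\sigma_0,0})\hookrightarrow L^2 \hookrightarrow X^{\sigma_0,0,0;\infty}_\pm(\delta)$ is a legitimate alternative to the paper's route (which uses the sign-independence of the $b=0$ norm, the identity \eqref{Commutation} and Lemma \ref{CutoffLemma} to get $\delta^{1-\varepsilon}$); yours even gives the marginally better factor $M\delta$, and either suffices.

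The one point where your write-up falls short of the stated conclusion is the asymmetric bound $\sum_\pm\norm{\psi_\pm}_{X_\pm^{\sigma_0,0,1/2;1}(\delta)}\le 2ca_0$, with no $b_0$ on the right. A contraction on a single ball of radius $R=2c(a_0+b_0)$ in the product norm only yields $\sum_\pm\norm{\psi_\pm}\le 2c(a_0+b_0)$. The paper avoids this by tracking the spinor and field norms separately (the quantities $A_n$, $B_n$ and the coupled estimates \eqref{Aest}--\eqref{Best}), and the sharper spinor bound matters downstream: it is what produces \eqref{IterationBound1} with right-hand side $c\,\mathfrak M_\sigma(0)^{1/2}$ rather than $c(\mathfrak M_\sigma(0)^{1/2}+\mathfrak N_\sigma(0))$, which in turn gives the precise structure of \eqref{Mest}. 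The fix within your framework is one line: once the smallness condition on $\delta$ holds, the fixed point satisfies $\sum_\pm\norm{\psi_\pm}\le ca_0+c\delta^{1/2}(M+\norm{\phi_+})\sum_\pm\norm{\psi_\pm}\le ca_0+\tfrac12\sum_\pm\norm{\psi_\pm}$, so you should either add this bootstrap or run the iteration on a product of balls of radii $2ca_0$ and $2c(a_0+b_0)$ as the paper does.
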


\begin{proof}
To simplify the notation we write $\sigma=\sigma_0$. Define the Picard iterates $(\psi^{(n)}_+, \psi^{(n)}_-, \phi^{(n)}_+)_{n=-1}^\infty$ by starting at zero at $n=-1$ and continuing by the scheme
\[
\left\{
\begin{alignedat}{2}
  (-i\partial_t + \abs{D})\psi_+^{(n+1)} &= \Pi_+\bigl( - M\beta\psi^{(n)} + (\re\phi_+^{(n)})\beta\psi^{(n)}\bigr),& \quad \psi_+^{(n+1)}(0,x) &= f_+(x),
  \\
  (-i\partial_t - \abs{D})\psi_-^{(n+1)} &= \Pi_-\bigl(- M\beta\psi^{(n)} + (\re\phi_+^{(n)})\beta\psi^{(n)}\bigr),& \quad \psi_-^{(n+1)}(0,x) &= f_-(x),
  \\
  (-i\partial_t + \angles{D}) \phi_+^{(n+1)} &= \angles{D}^{-1} \innerprod{\beta \psi^{(n)}}{\psi^{(n)}},& \quad \phi_+^{(n+1)}(0,x) &= g_+(x),
\end{alignedat}
\right.
\]
where $\psi^{(n)} := \psi^{(n)}_+ + \psi^{(n)}_-$. Since $\Pi_+^2=\Pi_+$ and $\Pi_-\Pi_+ = 0$, we have $\Pi_+\psi^{(n)}_+ = \psi^{(n)}_+$ and $\Pi_-\psi^{(n)}_+ = 0$. Thus, $\Pi_+\psi^{(n)} = \psi^{(n)}_+$, and similarly $\Pi_-\psi^{(n)} = \psi^{(n)}_-$. Setting
\[
\begin{alignedat}{2}
  A_n &= \sum_\pm \bignorm{\psi^{(n)}_\pm}_{X^{\sigma,0,1/2;1}_\pm(\delta)},&
  \qquad
  \Delta A_n &= \sum_\pm \bignorm{\psi^{(n+1)}_\pm - \psi^{(n)}_\pm}_{X^{\sigma,0,1/2;1}_\pm(\delta)},
  \\
  B_n &= \bignorm{\phi^{(n)}_+}_{X^{\sigma,1/2,1/2;1}_+(\delta)},&
  \Delta B_n &= \bignorm{\phi^{(n+1)}_+ - \phi^{(n)}_+}_{X^{\sigma,1/2,1/2;1}_+(\delta)},
\end{alignedat}
\]
we claim that
\begin{align}
  \label{Aest}
  A_{n+1} &\le ca_0 + c \delta^{1/2} A_n \left( M + B_n \right),
  \\
  \label{Best}
  B_{n+1} &\le cb_0 + c \delta^{1/2} A_n^2,
\end{align}
and
\begin{align}
  \label{dAest}
  \Delta A_{n+1} &\le c \delta^{1/2} \Delta A_n \left( M + B_n \right) + c \delta^{1/2} A_n \Delta B_n,
  \\
  \label{dBest}
  \Delta B_{n+1} &\le c \delta^{1/2} A_n \Delta A_n.
\end{align}
Then by induction one obtains $A_n \le 2ca_0$ and $B_n \le 2c(a_0+b_0)$ for all $n$, and further $\Delta A_{n+1} + \Delta B_{n+1} \le (1/2) (\Delta A_n + \Delta B_n)$, with $\delta$ as in the statement of the theorem, for a sufficiently small $c_0 > 0$ depending on $c$ and $M$. The sequence of iterates therefore converges and the conclusion of the theorem follows.

It remains to prove the claimed estimates. By Lemma \ref{IVPlemma},
\begin{multline*}
  A_{n+1} \le ca_0 + \sum_\pm c \delta^{1/2} \bignorm{\Pi_{\pm}\bigl( M \beta \psi^{(n)} \bigr)}_{X^{\sigma,0,0;\infty}_\pm(\delta)}
  \\
  +
  \sum_\pm c \delta^{1/2-b_2} \bignorm{\Pi_{\pm}\bigl(\re\phi_+^{(n)}\beta\psi^{(n)}\bigr)}_{X^{\sigma,0,-b_2;\infty}_\pm(\delta)}.
\end{multline*}
Using $X^{\sigma,0,0;\infty}_\pm(\delta) = X^{\sigma,0,0;\infty}_\mp(\delta)$, the identity \eqref{Commutation} and Lemma \ref{CutoffLemma}, we bound the second term on the right by
\[
  \sum_\pm c \delta^{1/2} \bignorm{M \beta \psi^{(n)}_\mp}_{X^{\sigma,0,0;\infty}_\mp(\delta)}
  \le
  c_\varepsilon \delta^{1-\varepsilon} M A_n
\]
for any $\varepsilon > 0$. The third term we bound by, applying Corollary \ref{BilinearCor} with $a=1/2$,
\begin{equation}\label{DifficultEstimate}
  \sum_{s_1,s_2} c\delta^{1/2-b_2} \bignorm{\Pi_{s_2}\bigl(\re\phi_+^{(n)}\beta\Pi_{s_1}\psi^{(n)}\bigr)}_{X^{\sigma,0,-b_2;\infty}_\pm(\delta)}
  \le c
  \delta^{3/2-b_0-b_1-b_2} B_n A_n,
\end{equation}
which requires $b_0,b_1,b_2 \ge 1/4$ and $b_0+b_1+b_2 \ge 1$. We choose $b_0=b_1=b_2=1/3$. Finally, the estimate \eqref{Best} similarly reduces to
\[
  \bignorm{\innerprod{\beta \Pi_{s_1}\psi^{(n)}}{\Pi_{s_2}\psi^{(n)}}}_{X^{\sigma,-1/2,-1/3;\infty}_+(\delta)}
  \le c
  \delta^{1/3} A_n^2,
\]
which also follows from Corollary \ref{BilinearCor}. Finally, the estimates \eqref{dAest} and \eqref{dBest} follow from the same considerations by linearity.
\end{proof}

\section{Approximate conservation of charge}\label{Approx}

In this section we prove Theorem \ref{ApproxCons}. We need the following key estimate.

\begin{lemma}\label{CommutatorLemma} Assume that $a,b_0,b_1,b_2$ satisfy the assumptions of Theorem \ref{TrilinearTheorem}. Then there exists a constant $c > 0$ such that for all signs $s_0,s_1,s_2 \in \{+,-\}$, all $\sigma \ge 0$ and all $\theta \in [0,1]$ we have the estimate
\begin{multline*}
  \Abs{ \int \Innerprod{e^{\sigma\norm{D}}\bigl(\phi\beta\Pi_{s_1}\psi_1\bigr) - \phi\bigl(\beta\Pi_{s_1}e^{\sigma\norm{D}}\psi_1\bigr)}{\Pi_{s_2}\psi_2} \, dt \, dx}
  \\
  \le c \sigma^{\theta}
  \norm{\phi}_{X^{\sigma,a+\theta,b_0;1}_{s_0}}
  \norm{\psi_1}_{X^{\sigma,0,b_1;1}_{s_1}}
  \norm{\psi_2}_{X^{0,0,b_2;1}_{s_2}}.
\end{multline*}
\end{lemma}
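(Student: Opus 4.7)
The plan is to reduce the commutator estimate to the Fourier-side inequality \eqref{FourierEst} underlying Theorem~\ref{TrilinearTheorem}, extracting a gain of $\sigma^\theta$ from the difference of exponential weights. First I would pass to the Fourier side: since $e^{\sigma\norm{D}}$ is a real Fourier multiplier commuting with the spinorial projections, Plancherel's theorem gives
\begin{multline*}
  \int \Innerprod{e^{\sigma\norm{D}}(\phi\beta\Pi_{s_1}\psi_1) - \phi\beta\Pi_{s_1}e^{\sigma\norm{D}}\psi_1}{\Pi_{s_2}\psi_2} \, dt \, dx
  \\
  = c \int \bigl(e^{\sigma\norm{\eta}} - e^{\sigma\norm{\eta-\xi}}\bigr)\widehat\phi(\tau,\xi) \Innerprod{\beta\Pi(s_1(\eta-\xi))\widehat{\psi_1}(\lambda-\tau,\eta-\xi)}{\Pi(s_2\eta)\widehat{\psi_2}(\lambda,\eta)} \, d\lambda \, d\tau \, d\eta \, d\xi,
\end{multline*}
and as in the proof of Theorem~\ref{TrilinearTheorem} the sign-flip identity \eqref{Commutation} combined with the null estimate \eqref{NullEstimate} bounds the projection product in absolute value by $\theta_{12} = \angle(s_1(\eta-\xi),s_2\eta)$.

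Next I would extract the $\sigma^\theta$ gain by interpolating two pointwise bounds for the weight difference. The mean value theorem and the reverse triangle inequality $\bigl|\norm{\eta}-\norm{\eta-\xi}\bigr|\le\norm{\xi}$ give $\bigl|e^{\sigma\norm{\eta}}-e^{\sigma\norm{\eta-\xi}}\bigr|\le\sigma\norm{\xi}\,e^{\sigma\norm{\eta-\xi}}e^{\sigma\norm{\xi}}$, while the trivial bound $|e^a-e^b|\le e^a+e^b$ together with $e^{\sigma\norm{\eta}}\le e^{\sigma\norm{\eta-\xi}}e^{\sigma\norm{\xi}}$ yields $\bigl|e^{\sigma\norm{\eta}}-e^{\sigma\norm{\eta-\xi}}\bigr|\le 2 e^{\sigma\norm{\eta-\xi}}e^{\sigma\norm{\xi}}$. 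Taking the geometric mean then produces, for any $\theta\in[0,1]$,
\[
  \bigl|e^{\sigma\norm{\eta}} - e^{\sigma\norm{\eta-\xi}}\bigr| \le 2\bigl(\sigma\norm{\xi}\bigr)^\theta e^{\sigma\norm{\eta-\xi}} e^{\sigma\norm{\xi}}.
\]
Plugging this in and using $\norm{\xi}^\theta\le\angles{\xi}^\theta$, the commutator is majorized by $\sigma^\theta$ times
\[
  \int \theta_{12}\,\angles{\xi}^\theta e^{\sigma\norm{\xi}}\Abs{\widehat\phi(\tau,\xi)} \cdot e^{\sigma\norm{\eta-\xi}}\Abs{\widehat{\psi_1}(\lambda-\tau,\eta-\xi)} \cdot \Abs{\widehat{\psi_2}(\lambda,\eta)} \, d\lambda \, d\tau \, d\eta \, d\xi.
\]

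This last integral is precisely of the form \eqref{FourierEst} with effective amplitudes $\angles{\xi}^\theta e^{\sigma\norm{\xi}}\abs{\widehat\phi}$, $e^{\sigma\norm{\eta-\xi}}\abs{\widehat{\psi_1}}$ and $\abs{\widehat{\psi_2}}$. Invoking \eqref{FourierEst} with the original exponent $a$, the $\angles{\xi}^\theta$ factor fuses with the $\angles{\xi}^a$ from the $X^{a,b_0;1}_{s_0}$ norm to produce the required weight $\angles{\xi}^{a+\theta}$, the two exponential weights promote the $L^2$-based norms on $\widehat\phi$ and $\widehat{\psi_1}$ to their analytic Gevrey counterparts $X^{\sigma,a+\theta,b_0;1}_{s_0}$ and $X^{\sigma,0,b_1;1}_{s_1}$, and $\psi_2$ retains no exponential weight, yielding the lemma. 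The main point requiring care is the bookkeeping of which exponential factor migrates onto which side of the convolution after the pointwise estimate; this is dictated entirely by the triangle inequality $\norm{\eta}\le\norm{\eta-\xi}+\norm{\xi}$, so once the interpolated weight bound is in hand no new ingredient beyond Theorem~\ref{TrilinearTheorem} is needed.
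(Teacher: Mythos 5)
Your proposal is correct and follows essentially the same route as the paper: Plancherel plus the identities \eqref{Commutation} and \eqref{NullEstimate} reduce the commutator to a Fourier-side integral with weight $e^{\sigma\norm{\eta}}-e^{\sigma\norm{\eta-\xi}}$, which is bounded pointwise by $c(\sigma\norm{\xi})^\theta e^{\sigma\norm{\xi}}e^{\sigma\norm{\eta-\xi}}$ and then fed into \eqref{FourierEst}. The only (immaterial) difference is that you obtain this pointwise bound by interpolating the mean-value and trivial estimates, whereas the paper factors out $e^{\sigma\norm{\eta-\xi}}$ and applies $\abs{e^x-1}\le\abs{x}^\theta e^{\abs{x}}$ together with $\bigabs{\norm{\eta}-\norm{\eta-\xi}}\le\norm{\xi}$.
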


\begin{proof}
By Plancherel's theorem we bound the left side by
\[
  \Abs{ \int \Lambda(\xi,\eta) \widehat\phi(\tau,\xi)
  \Innerprod{\beta\Pi\left(s_1(\eta-\xi)\right)\widehat{\psi_1}(\lambda-\tau,\eta-\xi)}
  {\Pi\left(s_2\eta\right)\widehat{\psi_2}(\lambda,\eta)} \, d\lambda \, d\tau \, d\eta \, d\xi }
\]
where
\[
  \Lambda(\xi,\eta)
  =
  e^{\sigma\norm{\eta}} - e^{\sigma\norm{\eta-\xi}}
  =
  e^{\sigma\norm{\eta-\xi}} \left( e^{\sigma(\norm{\eta} - \norm{\eta-\xi})} - 1 \right).
\]
As in the proof of Theorem \ref{TrilinearTheorem} we then bound by
\[
  c \int \Abs{\Lambda(\xi,\eta)} \theta_{12}
  \Abs{\widehat\phi(\tau,\xi)}
  \Abs{\widehat{\psi_1}(\lambda-\tau,\eta-\xi)}
  \Abs{\widehat{\psi_2}(\lambda,\eta)} \, d\lambda \, d\tau \, d\eta \, d\xi
\]
Applying the inequality
\[
  \Abs{e^x-1} \le \abs{x}^\theta e^{\abs{x}} \quad (x \in \R, \; \theta \in [0,1]),
\]
and the triangle inequality $\bigabs{\norm{\eta} - \norm{\eta-\xi}} \le \norm{\xi}$, we finally bound by
\[
  c \sigma^\theta \int \theta_{12}
  \angles{\xi}^\theta e^{\sigma\norm{\xi}} \Abs{\widehat\phi(\tau,\xi)}
  e^{\sigma\norm{\eta-\xi}} \Abs{\widehat{\psi_1}(\lambda-\tau,\eta-\xi)}
  \Abs{\widehat{\psi_2}(\lambda,\eta)} \, d\lambda \, d\tau \, d\eta \, d\xi
\]
and the desired estimate then follows from \eqref{FourierEst}.
\end{proof}

We now have all the tools needed to prove the approximate conservation law.

\subsection{Proof of Theorem \ref{ApproxCons}}

By Theorem \ref{LWP} (applied with $\sigma_0$ replaced by $\sigma \in [0,\sigma_0]$) there exist constants $c,c_0 > 0$ such that for all $\sigma \in [0,\sigma_0]$ we have the bounds
\begin{gather}
  \label{IterationBound1}
  \norm{\psi_+}_{X^{\sigma,0,1/2;1}_+(\delta(\sigma))} + \norm{\psi_-}_{X^{\sigma,0,1/2;1}_-(\delta(\sigma))}
  \le c\mathfrak M_\sigma(0)^{1/2},
  \\
  \label{IterationBound2}
  \norm{\phi_+}_{X^{\sigma,1/2,1/2;1}_+(\delta(\sigma))}
  \le c \left( \mathfrak M_\sigma(0)^{1/2} + \mathfrak N_\sigma(0) \right),
\end{gather}
where
\begin{equation}\label{Time}
  \delta(\sigma) = \frac{c_0}{1+\mathfrak M_\sigma(0) + \mathfrak N_{\sigma}(0)^2}.
\end{equation}
But clearly, $\delta(\sigma) \ge \delta := \delta(\sigma_0)$ for $\sigma \in [0,\sigma_0]$, so we may replace $\delta(\sigma)$ by $\delta$ in \eqref{IterationBound1} and \eqref{IterationBound2}. 

\subsubsection{Proof of \eqref{Mest}}

Set $\Psi_\pm = e^{\sigma\norm{D}}\psi_\pm$ and $\Psi=\Psi_+ + \Psi_-$. Then \eqref{DKG} gives
\begin{align*}
  (-i\partial_t + \abs{D})\Psi_+ = \Pi_+\bigl( - M\beta\Psi + (\re\phi_+)\beta\Psi\bigr) + \Pi_+F,
  \\
  (-i\partial_t - \abs{D})\Psi_- = \Pi_-\bigl(- M\beta\Psi + (\re\phi_+)\beta\Psi\bigr) + \Pi_-F,
\end{align*}
where
\[
  F = e^{\sigma\norm{D}} \bigl( (\re\phi_+)\beta\psi \bigr) - (\re\phi_+)\beta\Psi.
\]
Now we calculate
\begin{align*}
  \frac{d}{dt} \mathfrak M_\sigma(t)
  &=
  \frac{d}{dt} 
  \int \bigl( \innerprod{\Psi_+(t,x)}{\Psi_+(t,x)} + \innerprod{\Psi_-(t,x)}{\Psi_-(t,x)} \bigr) \, dx
  \\
  &=
  2 \im
  \int \bigl( \innerprod{i\partial_t\Psi_+}{\Psi_+} + \innerprod{i\partial_t\Psi_-}{\Psi_-} \bigr) \, dx
  \\
  &=
  2 \im
  \int \bigl( \innerprod{(i\partial_t-\abs{D})\Psi_+}{\Psi_+} + \innerprod{(i\partial_t+\abs{D})\Psi_-}{\Psi_-} \bigr) \, dx
  \\ & \quad + 2 \im \int \bigl( \innerprod{\abs{D}\Psi_+}{\Psi_+} + \innerprod{-\abs{D}\Psi_-}{\Psi_-} \bigr) \, dx
  \\
  &=
  2 \im
  \int \bigl( (M-\re\phi_+)\innerprod{\beta\Psi}{\Psi} - \innerprod{F}{\Psi} \bigr) \, dx
  \\
  &= - 2 \im
  \int \innerprod{F}{\Psi} \, dx,
\end{align*}
where we used Plancherel to see that $\int \innerprod{\abs{D}\Psi_\pm}{\Psi_\pm} \, dx = 0$ and the self-adjointness of $\beta$ to see that $\innerprod{\beta\Psi}{\Psi}$ is real valued. Integrating over the time interval $[0,T]$ for any $T \in [0,\delta]$ we then get
\[
  \mathfrak M_\sigma(T)
  \le
  \mathfrak M_\sigma(0)
  +
  2 \Abs{ \int \chi_{[0,T]}(t) \innerprod{F}{\Psi}(t,x) \, dx \, dt}
\]
and applying Lemma \ref{CommutatorLemma} with
\begin{equation}\label{bchoice}
  b_0=b_1=b_2=
  \begin{cases} 1/2-a/3 &\text{if $a \in [3/8,1/2]$}
  \\
  3/4-a &\text{if $a \in (1/4,3/8)$}
  \end{cases},
\end{equation}
 we bound the integral term by
\[
  c \sum_{s_0,s_1,s_2 \in \{+,-\}} \sigma^{\theta}
  \norm{\chi_{[0,T]}\phi_{s_0}}_{X^{\sigma,a+\theta,b_0;1}_{s_0}}
  \norm{\chi_{[0,T]}\psi_{s_1}}_{X^{\sigma,0,b_1;1}_{s_1}}
  \norm{\chi_{[0,T]}\psi_{s_2}}_{X^{\sigma,0,b_2;1}_{s_2}},
\]
where we wrote $2\re\phi_+ = \phi_+ + \overline{\phi_+}$ and used $\phi_- = \overline{\phi_+}$. Taking $\theta=1/2-a$ and invoking Lemma \ref{SharpTimeRestrictionLemma} followed by Lemma \ref{CutoffLemma}, we bound the summands by
\begin{align*}
  &c \sigma^{1/2-a}
  \norm{\phi_{s_0}}_{X^{\sigma,1/2,b_0;1}_{s_0}(T)}
  \norm{\psi_{s_1}}_{X^{\sigma,0,b_1;1}_{s_1}(T)}
  \norm{\psi_{s_2}}_{X^{\sigma,0,b_2;1}_{s_2}(T)}
  \\
  &\quad \le c T^{3/2-b_0-b_1-b_2}
  \sigma^{1/2-a}
  \norm{\phi_{s_0}}_{X^{\sigma,1/2,1/2;1}_{s_0}(T)}
  \norm{\psi_{s_1}}_{X^{\sigma,0,1/2;1}_{s_1}(T)}
  \norm{\psi_{s_2}}_{X^{\sigma,0,1/2;1}_{s_2}(T)}
  \\
  &\quad \le c T^{p}
  \sigma^{1/2-a}
  \mathfrak M_\sigma(0) \left( \mathfrak M_\sigma(0)^{1/2} + \mathfrak N_\sigma(0) \right),
\end{align*}
where we applied the bounds \eqref{IterationBound1} and \eqref{IterationBound2} and used the fact that $\norm{\phi_{-}}_{X^{\sigma,1/2,1/2;1}_{-}(T)} = \norm{\phi_+}_{X^{\sigma,1/2,1/2;1}_{+}(T)}$, on account of $\phi_- = \overline{\phi_+}$. This concludes the proof of \eqref{Mest}.

\subsubsection{Proof of \eqref{Nest}}

Applying Corollary \ref{IVPcor} to the last equation in \eqref{DKG} gives
\[
  \sup_{t \in [0,\delta]} \mathfrak N_\sigma(t)
  \le
  \mathfrak N_\sigma(0)
  + c\delta^{1/2-b_0} \norm{\angles{D}^{-1} \innerprod{\beta \psi}{\psi}}_{X^{\sigma,1/2,-b_0;\infty}_{+}(\delta)},
\]
where $b_0 \in [0,1/2]$ remains to be chosen. Separating low frequencies, $\norm{\xi} \le \sigma^{-1}$, and high frequencies, $\norm{\xi} > \sigma^{-1}$, we estimate the last term by
\[
  c\delta^{1/2-b_0} \left( \norm{\angles{D}^{-1} \innerprod{\beta \psi}{\psi}}_{X^{0,1/2,-b_0;\infty}_{+}(\delta)}
  +
  \sigma^{\theta} \norm{\angles{D}^{\theta-1} \innerprod{\beta \psi}{\psi}}_{X^{\sigma,1/2,-b_0;\infty}_{+}(\delta)} \right),
\]
where $\theta \in [0,1]$ remains to be chosen. We are going to estimate both terms using Corollary \ref{BilinearCor} and the bound \eqref{IterationBound1}. First, taking $a=1/2$ and $b_1=b_2=(1-b_0)/2$ for any $b_0 \in [0,1/2]$, and setting $\sigma=0$, we get
\[
  \delta^{1/2-b_0}\norm{\angles{D}^{-1} \innerprod{\beta \psi}{\psi}}_{X^{0,1/2,-b_0;\infty}_{+}(\delta)}
  \le c \delta^{3/2-b_0-b_1-b_2} \mathfrak M_0(0)
  = c \delta^{1/2} \norm{\psi(0,\cdot)}_{L^2}^2.
\]
Taking $a=1/2-\theta$ and choosing the $b$'s as in \eqref{bchoice}, we similarly bound
\[
  \delta^{1/2-b_0} \sigma^{\theta} \norm{\angles{D}^{\theta-1} \innerprod{\beta \psi}{\psi}}_{X^{\sigma,1/2,-b_0;\infty}_{+}(\delta)}
  \le
  c \delta^{p} \sigma^{1/2-a} \mathfrak M_\sigma(0),
\]
concluding the proof of \eqref{Nest} and of Theorem \ref{ApproxCons}.

\bibliographystyle{amsplain}
\bibliography{database}

\end{document}